\newcommand{\reals}{\mathbb{R}}
\newcommand{\rationals}{\mathbb{Q}}
\newcommand{\complex}{\mathbb{C}}
\newcommand{\naturals}{\mathbb{N}}
\newcommand{\bracketb}[1]{\Big[#1\Big]}
\newcommand{\angles}[1]{\left\langle #1 \right\rangle}
\newcommand{\paraa}[1]{\big(#1\big)}
\newcommand{\parab}[1]{\Big(#1\Big)}
\newcommand{\spacearound}[1]{\quad#1\quad}
\renewcommand{\implies}{\spacearound{\Rightarrow}}
\newtheorem{theorem}{Theorem}[section]
\newtheorem{lemma}[theorem]{Lemma}
\newtheorem{proposition}[theorem]{Proposition}
\theoremstyle{definition}
\newtheorem{definition}[theorem]{Definition}
\theoremstyle{remark}
\newtheorem{remark}[theorem]{Remark}
\numberwithin{equation}{section}
\renewcommand{\mid}{\mathds{1}}
\newcommand{\A}{\mathcal{A}}
\renewcommand{\d}{\partial}
\newcommand{\dt}{\tilde{\partial}}
\renewcommand{\P}{\mathcal{P}}
\newcommand{\nablab}{\bar{\nabla}}
\newcommand{\Der}{\operatorname{Der}}
\newcommand{\nablasub}[1]{\nabla_{\!#1}}
\newcommand{\nablad}{\nablasub{d}}
\newcommand{\g}{\mathfrak{g}}
\newcommand{\gphi}{\g_{\varphi}}
\newcommand{\Mphi}{M_{\varphi}}
\newcommand{\half}{\frac{1}{2}}
\newcommand{\thalf}{\tfrac{1}{2}}
\newcommand{\Ws}{W^\ast}
\newcommand{\Zs}{Z^\ast}
\newcommand{\qb}{\bar{q}}
\newcommand{\Wsq}{|W|^2}
\newcommand{\Zsq}{|Z|^2}
\newcommand{\Wsqi}{|W|^{-2}}
\newcommand{\Zsqi}{|Z|^{-2}}
\newcommand{\xv}{\vec{x}}
\newcommand{\dup}[1]{\d^{(#1)}}
\newcommand{\dn}[1][n]{\dup{#1}}
\newcommand{\Ric}{\operatorname{Ric}}
\newcommand{\Ts}{T^\ast}
\newcommand{\Sfourt}{S^4_{\theta}}
\newcommand{\ZSfourt}{Z(\Sfourt)}
\newcommand{\TSfourt}{T\Sfourt}
\newcommand{\Sfourtloc}{S^4_{\theta,\operatorname{loc}}}
\newcommand{\hd}{h^{\delta}}
\newcommand{\td}{\tau_\delta}
\newcommand{\tfi}{(\mid-T^2)^{-1}}
\newcommand{\Ih}{\hat{I}}
\newcommand{\oneh}{\hat{1}}
\newcommand{\Zloc}{Z_{\operatorname{loc}}}
\newcommand{\tdloc}{\tau_{\delta,\operatorname{loc}}}
\title[]{On the Gauss-Chern-Bonnet theorem\\ for the noncommutative 4-sphere}
\author{Joakim Arnlind and Mitsuru Wilson}
\address[Joakim Arnlind]{Department of Mathematics\\
Link\"oping University\\
581 83 Link\"oping\\
Sweden}
\email{joakim.arnlind@liu.se}
\address[Mitsuru Wilson]{%
Middlesex College\\
University of Western Ontario\\
London, Ontario\\
N6A 5B7\\
Canada}
\email{mwils57@uwo.ca}
\subjclass[2010]{46L87} 
\keywords{Noncommutative differential geometry, Gauss-Chern-Bonnet
  theorem, Levi-Civita connection, Riemannian curvature,
  noncommutative 4-sphere}
\begin{document}

\begin{abstract}
  We construct a differential calculus over the noncommutative
  4-sphere in the framework of pseudo-Riemannian calculi, and show
  that for every metric in a conformal class of perturbations of the
  round metric, there exists a unique metric and torsion-free
  connection. Furthermore, we find a localization of the projective
  module corresponding to the space of vector fields, which
  allows us to formulate a Gauss-Chern-Bonnet type theorem for the
  noncommutative 4-sphere.
\end{abstract}

\maketitle

\section{Introduction}

\noindent
Over the last years, there have been increasing interests in
understanding the curvature of noncommutative manifolds. Starting from
seminal work on the scalar curvature and Gauss-Bonnet type theorems
for the noncommutative torus
\cite{ct:gaussBonnet,fk:gaussBonnet,cm:modularCurvature} many
interesting papers that discuss different aspects of curvature in the
noncommutative setting have followed
\cite{r:leviCivita,ds:curved.torus.gb,fk:scalarCurvature,a:curvatureGeometric,fk:curvatureFourTori,lm:modularCurvature,l:modular.toric,ps:nc.levi.civita,ds:asymmetric,aw:curvature.three.sphere,esw:moyal.sphere}. Note
that these are only examples of recent progress in the area; several
authors have previously considered curvature in this context (see
e.g. \cite{cff:gravityncgeometry,dvmmm:onCurvature,m:nc.riemannian.spin,ac:ncgravitysolutions,bm:starCompatibleConnections,ahh:multilinear}).
Although connections on projective modules and their corresponding
curvatures are natural objects in noncommutative geometry, classical
objects that are built from the curvature tensor, like Ricci and
scalar curvature, do not always have straight-forward
analogues. Therefore, it is interesting to study as to what extent
such concepts are relevant for noncommutative geometry.

For Riemannian manifolds, the Gauss-Chern-Bonnet theorem provides an
important link between geometry and topology. It states that the
integral of the Pfaffian of the curvature form (of a closed even
dimensional manifold) is proportional to the Euler characteristic,
which is a topological invariant. For a two dimensional manifold, the
Pfaffian is simply the scalar curvature, which reduces the
Gauss-Chern-Bonnet theorem to the Gauss-Bonnet theorem. Therefore, to
understand similar theorems for two dimensional noncommutative
manifolds, one needs to find a proper definition of the scalar
curvature. For a Riemannian manifold, the asymptotic expansion of the
heat kernel contains information about the scalar curvature in one of
the coefficients. The expansion of the heat kernel makes sense even
for a noncommutative manifold, and the very same coefficient serves as
a definition of noncommutative scalar curvature.  For the
noncommutative torus, the scalar curvature corresponding to certain
perturbations of the flat metric has been computed, and it is possible
to show that a Gauss-Bonnet type theorem holds; i.e., the trace of the
scalar curvature is independent of the metric perturbation
\cite{ct:gaussBonnet,fk:gaussBonnet}.  However, for higher dimensional
manifolds, it is not clear how to define the analogue of the Pfaffian
of the curvature form in order to formulate the Gauss-Chern-Bonnet
theorem.

In this paper we construct a differential calculus over the
noncommutative 4-sphere, in the framework of pseudo-Riemannian calculi
\cite{aw:curvature.three.sphere}, and introduce a projective module in
close analogy with the space of vector fields on the classical
4-sphere. Moreover, via a suitable localization of the algebra, we
find a local trivialization of the projective module and prove
the existence of (unique) metric and torsion-free connections for a class of
perturbations of the round metric. Finally, we show that in this
particular case, there exists a naive analogue of the Pfaffian of the
curvature form, which allows us to prove a Gauss-Chern-Bonnet type
theorem for the noncommutative 4-sphere.

This paper is organized as follows: In
Section~\ref{sec:pseudo.riemannian.calculi} we briefly recall the
concept of a pseudo-Riemannian calculus, and
Section~\ref{sec:embedding} introduces a particular parametrization of
the classical 4-sphere. Sections \ref{sec:basic.properties} and
\ref{sec:real.metric.calculus} are devoted to the construction of a
real metric calculus over the noncommutative 4-sphere, and
Section~\ref{sec:local.algebra} discusses certain aspects of
localization. These results are then used in
Section~\ref{sec:pseudo.Riemannian} to construct a pseudo-Riemannian
calculus, giving metric and torsion-free connections for a class of
perturbed metrics. Finally, Section~\ref{sec:gcb} introduces a trace
for the noncommutative 4-sphere, and formulates a version of the
Gauss-Chern-Bonnet theorem.

\section{Preliminaries}\label{sec:preliminaries}

\subsection{Pseudo-Riemannian calculi}\label{sec:pseudo.riemannian.calculi}

\noindent
Let us briefly recall the terminology from
\cite{aw:curvature.three.sphere} concerning pseudo-Riemannian calculi,
as this is the context in which we shall construct a differential
calculus over the noncommutative 4-sphere.

To define a pseudo-Riemannian calculus over an algebra $\A$, we
proceed in two steps. First, we define a \emph{real metric calculus}
over an algebra $\A$ by choosing a (right) $\A$-module $M$, together with a
non-degenerate bilinear form (the metric), as well as a Lie algebra of
derivations and a map $\varphi$ that associates an element of $M$ to
each derivation. Next, a pseudo-Riemannian calculus is defined to be
a real metric calculus for which there exists a metric and
torsion-free connection on $M$.

To fix our notation and terminology, let us recall the following definitions:

\begin{definition}\label{def:metric.module}
  Let $M$ be a right $\A$-module. A map $h:M\times M\to\A$ is called
  a hermitian form on $M$ if
  \begin{align*}
    &h(U,V+W) = h(U,V) + h(U,W)\\
    &h(U,Va) = h(U,V)a\\
    &h(U,V)^\ast = h(V,U).
  \end{align*}
  A hermitian form is \emph{non-degenerate} if $h(U,V)=0$ for all
  $V\in M$ implies that $U=0$. For brevity, we simply refer to a
  non-degenerate hermitian form as a \emph{metric} on $M$. The pair $(M,h)$,
  where $M$ is a right $\A$-module and $h$ is a hermitian form on $M$,
  is called a \emph{(right) hermitian $\A$-module}. If $h$ is a
  metric, we say that $(M,h)$ is a \emph{(right) metric $\A$-module}. 
\end{definition}

\begin{definition}[\cite{aw:curvature.three.sphere}]\label{def:real.metric.calculus}
  Let $(M,h)$ be a (right) metric $\A$-module, let
  $\g\subseteq\Der(\A)$ be a (real) Lie algebra of hermitian
  derivations and let $\varphi:\g\to M$ be a $\reals$-linear
  map. If we denote the pair $(\g,\varphi)$ by $\gphi$, the
  triple $(M,h,\gphi)$ is called a \emph{real metric calculus} if
  \begin{enumerate}
  \item the image $\Mphi=\varphi(\g)$ generates $M$ as an $\A$-module,
  \item
    $h(E,E')^\ast=h(E,E')$
    for all $E,E'\in\Mphi$.
  \end{enumerate}
\end{definition}

\begin{definition}[\cite{aw:curvature.three.sphere}]
  Let $(M,h,\gphi)$ be a real metric calculus and let $\nabla$ denote
  an affine connection on $(M,\g)$. If
  \begin{align*}
    h(\nabla_dE,E') = h(\nabla_dE,E')^\ast
  \end{align*}
  for all $E,E'\in\Mphi$ and $d\in \g$ then $(M,h,\gphi,\nabla)$ is
  called a \emph{real connection calculus}.
\end{definition}

\begin{definition}[\cite{aw:curvature.three.sphere}]\label{def:pseudo.riemannian.calculus}
  Let $(M,h,\gphi,\nabla)$ be a real connection calculus. The
  calculus is \emph{metric} if
  \begin{align*}
    d\paraa{h(U,V)} = h\paraa{\nablasub{d}U,V} + h\paraa{U,\nablad V}
  \end{align*}
  for all $d\in\g$, $U,V\in M$, and \emph{torsion-free} if 
  \begin{align*}
    \nablasub{d_1}\varphi(d_2)-\nablasub{d_2}\varphi(d_1)
    -\varphi\paraa{[d_1,d_2]} = 0
  \end{align*}
  for all $d_1,d_2\in \g$. A metric and torsion-free real connection
  calculus over $M$ is called a \emph{pseudo-Riemannian calculus over $M$}. 
\end{definition}

\noindent
Given a real metric calculus $(M,h,\gphi)$, it is natural to ask if it
is possible to find an affine connection such that
$(M,h,\gphi,\nabla)$ is a pseudo-Riemannian calculus. In general, this
is not possible, but if such a connection exists, it is unique.

\begin{theorem}[\cite{aw:curvature.three.sphere}]\label{thm:levi.civita}
  Let $(M,h,\gphi)$ be a real metric calculus over $M$. Then there exists
  at most one affine connection $\nabla$ on $(M,\g)$, such that
  $(M,h,\gphi,\nabla)$ is a pseudo-Riemannian calculus. 
\end{theorem}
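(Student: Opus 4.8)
The plan is to reproduce, in this algebraic setting, the classical proof that the Levi-Civita connection is pinned down by the Koszul formula. The three defining properties of a pseudo-Riemannian calculus---metric compatibility, vanishing torsion, and the reality condition of a real connection calculus---together overdetermine $\nabla$ on the generating set $\Mphi=\varphi(\g)$, and uniqueness on $\Mphi$ then propagates to all of $M$.

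Concretely, I would first fix $d_1,d_2,d_3\in\g$ and abbreviate $E_i=\varphi(d_i)$. Differentiating $h(E_2,E_3)$, $h(E_1,E_3)$, and $h(E_1,E_2)$ along $d_1$, $d_2$, and $d_3$ respectively and invoking metric compatibility produces three identities; I would then assemble the combination
\begin{align*}
  d_1\paraa{h(E_2,E_3)} + d_2\paraa{h(E_1,E_3)} - d_3\paraa{h(E_1,E_2)}.
\end{align*}
To simplify its right-hand side I would use two tools. First, the reality condition $h(\nablasub{d}E,E')=h(\nablasub{d}E,E')^\ast$ together with hermiticity $h(U,V)^\ast=h(V,U)$ gives $h(\nablasub{d}E,E')=h(E',\nablasub{d}E)$ for $E,E'\in\Mphi$, which lets me flip the arguments of ``mixed'' terms such as $h(E_2,\nablasub{d_1}E_3)$. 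Second, the torsion-free identity $\nablasub{d_i}\varphi(d_j)-\nablasub{d_j}\varphi(d_i)=\varphi([d_i,d_j])$ converts a covariant derivative sitting in the ``wrong'' slot into the wanted one plus a $\varphi$-bracket term. After cancellation, all but a single copy of each covariant-derivative term disappears, leaving a Koszul-type formula
\begin{align*}
  2h\paraa{\nablasub{d_1}\varphi(d_2),\varphi(d_3)}
  &= d_1\paraa{h(E_2,E_3)} + d_2\paraa{h(E_1,E_3)} - d_3\paraa{h(E_1,E_2)}\\
  &\quad + h\paraa{\varphi([d_1,d_2]),E_3} - h\paraa{\varphi([d_1,d_3]),E_2} - h\paraa{E_1,\varphi([d_2,d_3])},
\end{align*}
whose right-hand side involves only $h$, the derivations, and $\varphi$.

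The uniqueness conclusion then follows in three moves. Suppose $\nabla$ and $\nablat$ are affine connections for which both $(M,h,\gphi,\nabla)$ and $(M,h,\gphi,\nablat)$ are pseudo-Riemannian calculi. The Koszul formula forces $h\paraa{(\nablasub{d_1}-\nablatsub{d_1})\varphi(d_2),\varphi(d_3)}=0$ for all $d_3\in\g$. Since $\Mphi$ generates $M$ as an $\A$-module and $h$ is right $\A$-linear in its second argument, this extends to $h\paraa{(\nablasub{d_1}-\nablatsub{d_1})\varphi(d_2),V}=0$ for every $V\in M$; non-degeneracy of $h$ then yields $\nablasub{d_1}\varphi(d_2)=\nablatsub{d_1}\varphi(d_2)$, so the two connections agree on $\Mphi$. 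Finally, writing an arbitrary element of $M$ as $\sum_i\varphi(d_i)a_i$ and applying the Leibniz rule $\nablasub{d}(Ua)=(\nablasub{d}U)a+U\,d(a)$ common to all affine connections, I would propagate this agreement from $\Mphi$ to all of $M$, establishing $\nabla=\nablat$.

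The step I expect to require the most care is the bookkeeping in the Koszul computation: because $\A$ is noncommutative, the reality condition may only be applied to terms both of whose arguments lie in $\Mphi$ (so that the swap $h(\nablasub{d}E,E')=h(E',\nablasub{d}E)$ is justified), and the torsion-free substitutions must be performed in exactly the right slots, respecting the order of the arguments throughout. The conceptual point worth flagging is that the Koszul formula alone determines $\nabla$ only on the generators $\Mphi$; passing to all of $M$ genuinely relies on both the generating property of $\Mphi$ and the Leibniz rule, and it is here that non-degeneracy of the metric is essential.
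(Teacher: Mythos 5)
Your proposal is correct and takes essentially the same route as the paper: the Koszul-type identity you derive is precisely Kozul's formula \eqref{eq:kozul.formula} that the paper recalls as the key tool (the two expressions coincide after using $h(E,E')=h(E',E)$ for $E,E'\in\Mphi$ and $\varphi([d_3,d_1])=-\varphi([d_1,d_3])$). Your concluding steps---non-degeneracy of $h$ together with the generating property of $\Mphi$ to pin down $\nabla$ on $\varphi(\g)$, then the Leibniz rule to propagate agreement to all of $M$---are the standard completion of this argument.
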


\subsection{Embedding of $S^4$ in $\reals^5$}
\label{sec:embedding}

\noindent
The geometric constructions for the noncommutative 4-sphere will
closely follow that of classical geometry. Therefore, let us review an
explicit parametrization of $S^4$, giving a chart that covers almost
all of the manifold. Furthermore, we present a
particular basis for vector fields over that chart.

As a subset of $\reals^5$, the 4-dimensional sphere is defined as
\begin{align*}
  S^4=\{(x^1,x^2,x^3,x^4,x^5)\in\reals^5:
  (x^1)^2+(x^2)^2+(x^3)^2+(x^4)^2+(x^5)^2=1\},
\end{align*}
and we let $U_0\subseteq S^4$ denote the chart of $S^4$ given by
\begin{alignat*}{2}
  &x^1=\cos(\xi_1)\cos(\varphi)\cos(\psi) &\qquad
  &x^2=\sin(\xi_1)\cos(\varphi)\cos(\psi)\\
  &x^3=\cos(\xi_2)\sin(\varphi)\cos(\psi) &
  &x^4=\sin(\xi_2)\sin(\varphi)\cos(\psi)\\
  &x^5=\sin(\psi),
\end{alignat*}
where $0<\xi_1,\xi_2< 2\pi$, $0< \varphi<\pi/2$ and
$-\pi/2<\psi<\pi/2$. Equivalently, one may consider $z=x^1+ix^2$,
$w=x^3+ix^4$ and $t=x^5$ with
\begin{align*}
  &z = e^{i\xi_1}\cos(\varphi)\cos(\psi)\\
  &w = e^{i\xi_2}\sin(\varphi)\cos(\psi)\\
  &t = \sin(\psi).
\end{align*}
At each point $p\in U_0$, the tangent space $T_pS^4$ is spanned by
the vectors
\begin{align*}
  &\d_{\xi_1}\xv = (-\sin\xi_1\cos\varphi\cos\psi,\cos\xi_1\cos\varphi,\cos\psi,0,0)=(-x^2,x^1,0,0,0)\\
  &\d_{\xi_2}\xv = (0,0,-\sin\xi_2\sin\varphi\cos\psi,\cos\xi_2\sin\varphi\cos\psi,0)=(0,0,-x^4,x^3,0)\\
  &\d_{\varphi}\xv = (-\cos\xi_1\sin\varphi\cos\psi,-\sin\xi_1\sin\varphi\cos\psi,\cos\xi_2\cos\varphi\cos\psi,\sin\xi_2\cos\varphi\cos\psi,0)\\
  &\d_{\psi}\xv=(-\cos\xi_1\cos\varphi\sin\psi,-\sin\xi_1\cos\varphi\sin\psi,\\
  &\qquad\qquad-\cos\xi_2\sin\varphi\sin\psi,-\sin\xi_2\sin\varphi\sin\psi,\cos\psi).
\end{align*}
These vector fields are defined in the local chart $U_0$ and we would
like to extend them to global vector fields on $S^4$ (however,
\emph{not} providing a basis at each point of $S^4$). As written
above, $\d_{\xi_1}\xv$ and $\d_{\xi_2}\xv$ may be extended to all of
$S^4$, since all components can be expressed in terms of
$x^1,\ldots,x^5$. By rescaling $\d_{\varphi}\xv$ and $\d_{\psi}\xv$
one obtains
\begin{align*}
  -&|z||w|\d_\varphi\xv = (x^1|w|^2,x^2|w|^2,-x^3|z|^2,-x^4|z|^2,0)\\
  -&\cos\psi\,\d_{\psi}\xv = (x^1t,x^2t,x^3t,x^4t,-|z|^2-|w|^2),
\end{align*}
which are well defined as vector fields on $S^4$. Thus, the globally
defined vector fields given by
\begin{alignat*}{2}
  &e_1 = (-x^2,x^1,0,0,0) &
  &e_2 = (0,0,-x^4,x^3,0)\\
  &e_3 = (x^1|w|^2,x^2|w|^2,-x^3|z|^2,-x^4|z|^2,0) &\quad
  &e_4 = (x^1t,x^2t,x^3t,x^4t,-|z|^2-|w|^2),
\end{alignat*}
span the space of vector fields over $U_0$. For later comparison, let us
write down the action of the derivations corresponding to the above
vector fields:

\begin{equation}
  \label{eq:classical.derivations}
  \begin{alignedat}{3}
    &\d_1z = iz &\quad &\d_1w = 0 &\quad &\d_1t = 0\\
    &\d_2z = 0 & &\d_w = iw & &\d_2t = 0\\
    &\d_3z = z|w|^2 & &\d_3w = -w|z|^2 & &\d_3t=0\\
    &\d_4z = zt & &\d_4w = wt & &\d_4 t = t^2-1.
  \end{alignedat}  
\end{equation}

\section{The noncommutative 4-sphere}\label{sec:nc.4.sphere}

\subsection{Basic properties of $\Sfourt$}\label{sec:basic.properties}

\noindent
For $\theta\in[0,1)$, we let $\Sfourt$ denote the unital
$\ast$-algebra (over $\complex$) generated by $Z$, $W$ and $T$,
satisfying the relations \cite{cl:isospectral,dl:instanton.algebras}
\begin{equation}\label{eq:S4.def.relations}
  \begin{split}
    &WZ=qZW\qquad \Ws Z=\qb Z\Ws\\
    &Z\Zs+W\Ws+T^2=\mid\\
    &\Ts=T\qquad [T,Z]=[T,W]=[W,\Ws]=[Z,\Zs]=0,
  \end{split}
\end{equation}
where $q=e^{i2\pi\theta}$. Furthermore, $Z\Zs\in\ZSfourt$ and
  $W\Ws\in\ZSfourt$ where $\ZSfourt$ denotes the center of
  $\Sfourt$. It follows from \eqref{eq:S4.def.relations} that a
linear basis for $\Sfourt$ is given by the elements
\begin{align*}
  Z^j(\Zs)^kW^l(\Ws)^mT^\epsilon
\end{align*}
for $j,k,l,m\in\{0,1,2,\ldots\}$ and $\epsilon\in\{0,1\}$ (where, e.g., higher
powers of $T$ are eliminated by using the relation $T^2=\mid-Z\Zs-W\Ws$). For
convenience, let us introduce the multi-index notation
$I=(j,k,l,m,\epsilon)$ and
\begin{align*}
  e^I = Z^j(\Zs)^kW^l(\Ws)^mT^\epsilon
\end{align*}
such that, in this notation, every element $a\in\Sfourt$ can uniquely
be written as
\begin{align*}
  a =\sum_{I}a_Ie^I
\end{align*}
with $a_I\in\complex$. It is useful to develop the multi-index
notation a bit further. Namely, for $I=(j,k,l,m,\epsilon)$ we write
$I=(\Ih,\epsilon)$ with $\Ih=(j,k,l,m)$. Furthermore, we introduce
\begin{align*}
  1_Z = (1,1,0,0,0) = (\oneh_Z,0)\quad\text{and}\quad
  1_W = (0,0,1,1,0) = (\oneh_W,0),
\end{align*}
and we write $I+J$ for component-wise addition of multi-indices.  Let
us now state the result of multiplying two basis elements in the following
lemma:
\begin{lemma}\label{lemma:basis.product}
  If $I_1=(j_1,k_1,l_1,m_1,\epsilon_1)$ and 
  $I_2=(j_2,k_2,l_2,m_2,\epsilon_2)$ then
  \begin{align*}
    e^{I_1}e^{I_2} = 
    \begin{cases}
      q^{(l_1-m_1)(j_2-k_2)}e^{I_1+I_2}\text{ if }
        \epsilon_1+\epsilon_2\leq 1\\
      q^{(l_1-m_1)(j_2-k_2)}
      \paraa{e^{(\Ih_1+\Ih_2,0)}-e^{(\Ih_1+\Ih_2+\oneh_Z,0)}-e^{(\Ih_1+\Ih_2+\oneh_W,0)}}
      \text{ if }
      \epsilon_1+\epsilon_2=2.
    \end{cases}
  \end{align*}
\end{lemma}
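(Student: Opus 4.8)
The plan is to put the product $e^{I_1}e^{I_2}$ into standard basis form by commuting the generators of the second factor past those of the first, using only the defining relations in \eqref{eq:S4.def.relations}, and to deal with the powers of $T$ at the very end. Spelling out the product,
\begin{align*}
  e^{I_1}e^{I_2} = Z^{j_1}(\Zs)^{k_1}W^{l_1}(\Ws)^{m_1}T^{\epsilon_1}
  \,Z^{j_2}(\Zs)^{k_2}W^{l_2}(\Ws)^{m_2}T^{\epsilon_2},
\end{align*}
the first move is to slide $T^{\epsilon_1}$ to the far right. Since $[T,Z]=[T,W]=0$ and $\Ts=T$, the element $T$ is central, so this step is immediate and simply collects the factor $T^{\epsilon_1+\epsilon_2}$ at the end.

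Next I would record the iterated commutation relations between the $W$-generators and the $Z$-generators. From $WZ=qZW$ one gets $W^{l}Z^{j}=q^{lj}Z^{j}W^{l}$ by induction, and taking adjoints of the defining relations produces $W\Zs=\qb\Zs W$ and $\Ws\Zs=q\Zs\Ws$, which together with $\Ws Z=\qb Z\Ws$ give the four elementary swaps. Sliding $W^{l_1}(\Ws)^{m_1}$ to the right of $Z^{j_2}(\Zs)^{k_2}$ then accumulates $q^{l_1(j_2-k_2)}$ from the $W^{l_1}$ part and $q^{-m_1(j_2-k_2)}$ from the $(\Ws)^{m_1}$ part (recalling $\qb=q^{-1}$), so that
\begin{align*}
  W^{l_1}(\Ws)^{m_1}Z^{j_2}(\Zs)^{k_2}
  = q^{(l_1-m_1)(j_2-k_2)}\,Z^{j_2}(\Zs)^{k_2}W^{l_1}(\Ws)^{m_1}.
\end{align*}
This single identity accounts for the scalar $q^{(l_1-m_1)(j_2-k_2)}$ in the statement. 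Afterwards, the relations $[Z,\Zs]=0$ and $[W,\Ws]=0$ let me merge the adjacent $Z$- and $W$-blocks into $Z^{j_1+j_2}(\Zs)^{k_1+k_2}$ and $W^{l_1+l_2}(\Ws)^{m_1+m_2}$, i.e.\ into $e^{(\Ih_1+\Ih_2,0)}$ up to the accumulated power of $T$.

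It remains to handle $T^{\epsilon_1+\epsilon_2}$. If $\epsilon_1+\epsilon_2\leq 1$ this is already an admissible basis factor and the product equals $q^{(l_1-m_1)(j_2-k_2)}e^{I_1+I_2}$. If $\epsilon_1+\epsilon_2=2$, I substitute $T^2=\mid-Z\Zs-W\Ws$ and use that $Z\Zs$ and $W\Ws$ are central (as noted after \eqref{eq:S4.def.relations}); since $Z$ commutes with $\Zs$ and $W$ with $\Ws$, multiplying $e^{(\Ih_1+\Ih_2,0)}$ by $Z\Zs$ raises its index by $\oneh_Z$ and by $W\Ws$ raises it by $\oneh_W$, yielding exactly the three-term expression in the statement. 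I expect the only genuine point of care to be the bookkeeping of conjugations and signs in the exponent of $q$ during the commutation step; once the four elementary swaps are fixed, the rest is a routine application of centrality and commutativity.
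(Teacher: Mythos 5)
Your proposal is correct and follows essentially the same route as the paper's proof: slide $T^{\epsilon_1}$ to the right using centrality of $T$, accumulate the factor $q^{(l_1-m_1)(j_2-k_2)}$ by commuting the $W$-block past the $Z$-block (the paper moves $Z^{j_2}$ and $(\Zs)^{k_2}$ leftward instead, which is the same computation in the opposite direction), merge the commuting blocks, and finally expand $T^2=\mid-Z\Zs-W\Ws$ using centrality of $Z\Zs$ and $W\Ws$. Your explicit derivation of the auxiliary swaps $W\Zs=\qb\Zs W$ and $\Ws\Zs=q\Zs\Ws$ by taking adjoints is a detail the paper leaves implicit, but the argument is the same.
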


\begin{proof}
  Using \eqref{eq:S4.def.relations} one obtains
  \begin{align*}
    e^Ie^J &= Z^{j_1}(\Zs)^{k_1}W^{l_1}(\Ws)^{m_1}T^{\epsilon_1}
    Z^{j_2}(\Zs)^{k_2}W^{l_2}(\Ws)^{m_2}T^{\epsilon_2}\\
    &=q^{j_2(l_1-m_1)}Z^{j_1+j_2}(\Zs)^{k_1}W^{l_1}(\Ws)^{m_1}
    (\Zs)^{k_2}W^{l_2}(\Ws)^{m_2}T^{\epsilon_1+\epsilon_2}\\
    &=q^{j_2(l_1-m_1)}q^{k_2(m_1-l_1)}Z^{j_1+j_2}(\Zs)^{k_1+k_2}W^{l_1}(\Ws)^{m_1}
    W^{l_2}(\Ws)^{m_2}T^{\epsilon_1+\epsilon_2}\\
    &=q^{(l_1-m_1)(j_2-k_2)}
    Z^{j_1+j_2}(\Zs)^{k_1+k_2}W^{l_1+l_2}(\Ws)^{m_1+m_2}T^{\epsilon_1+\epsilon_2}.
  \end{align*}
  Now, if $\epsilon_1+\epsilon_2\leq 1$ then the statement in the
  Lemma is proved. If $\epsilon_1+\epsilon_2=2$, then the statement
  follows after using that $T^2 = \mid-Z\Zs-W\Ws$, and the fact that
  both $Z\Zs$ and $W\Ws$ are central.
\end{proof}

\noindent
Let us now proceed to state a few properties of $\Sfourt$ that we
shall need in the following.

\begin{proposition}\label{prop:zero.divisors}
  The elements $Z\Zs$, $W\Ws$ and $\mid-T^2$ are regular (i.e. none of
  them is a zero divisor).
\end{proposition}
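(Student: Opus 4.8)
The plan is to dispatch the two central elements $Z\Zs$ and $W\Ws$ directly, and then reduce the case of $\mid-T^2$ to them via the relation $\mid-T^2=Z\Zs+W\Ws$ coming from \eqref{eq:S4.def.relations}. Writing $Z\Zs=e^{1_Z}$ and $W\Ws=e^{1_W}$, Lemma~\ref{lemma:basis.product} gives $e^{1_Z}e^{I}=e^{1_Z+I}$ for every multi-index $I$: the phase $q^{(l_1-m_1)(j_2-k_2)}$ is trivial because $1_Z$ has $l_1=m_1=0$, and $\epsilon_1=0$ keeps us in the first case. Since $I\mapsto 1_Z+I$ is injective on multi-indices and sends basis elements to distinct basis elements, left multiplication by $Z\Zs$ is injective on $\Sfourt$; as $Z\Zs$ is central, left and right injectivity coincide, so $Z\Zs$ is regular. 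The identical argument applies to $W\Ws$.

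For $\mid-T^2=Z\Zs+W\Ws$ the naive approach fails, because left multiplication no longer permutes the basis: one has $(Z\Zs+W\Ws)e^{I}=e^{1_Z+I}+e^{1_W+I}$, and these two families can overlap and cancel (for instance $1_Z+I=1_W+I'$ whenever $I'=(j+1,k+1,l-1,m-1,\epsilon)$ with $l,m\geq 1$). To control this I would first reduce to the subalgebra $\A_0\subseteq\Sfourt$ spanned by the basis elements with $\epsilon=0$, equivalently the subalgebra generated by $Z,\Zs,W,\Ws$. By Lemma~\ref{lemma:basis.product} this is indeed a subalgebra, and $\Sfourt=\A_0\oplus\A_0T$ as vector spaces. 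Since $Z\Zs+W\Ws\in\A_0$ is central, for $a=a_0+a_1T$ we have $(Z\Zs+W\Ws)a=(Z\Zs+W\Ws)a_0+\paraa{(Z\Zs+W\Ws)a_1}T$, which vanishes precisely when $(Z\Zs+W\Ws)a_0=0$ and $(Z\Zs+W\Ws)a_1=0$ hold in $\A_0$. Hence it suffices to prove that $Z\Zs+W\Ws$ is regular in $\A_0$.

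Finally, I would establish the structural fact that $\A_0$ is \emph{free} as a module over its central commutative subalgebra $\complex[Z\Zs,W\Ws]$. Setting $x=Z\Zs$ and $y=W\Ws$, every basis element factors uniquely as $Z^{j}(\Zs)^{k}W^{l}(\Ws)^{m}=x^{\min(j,k)}y^{\min(l,m)}u_{p,r}$, where $u_{p,r}$ is the reduced monomial determined by the charges $p=j-k$ and $r=l-m$ (a pure power of $Z$ or $\Zs$ times a pure power of $W$ or $\Ws$); no $q$-phase arises because only central factors and same-letter commuting factors are rearranged. This exhibits $\A_0=\bigoplus_{p,r\in\integers}\complex[x,y]\,u_{p,r}$, and since the monomials $x^{s}y^{t}$ are distinct basis elements, $\complex[x,y]$ is a genuine two-variable polynomial ring, hence an integral domain. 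Multiplication by the central element $x+y$ preserves each summand and acts there as multiplication by $x+y$ in $\complex[x,y]$, which is injective because $x+y\neq 0$ in a domain; therefore $Z\Zs+W\Ws$ is regular in $\A_0$, and by the reduction above in $\Sfourt$. (Equivalently, the $q$-commutation relations make $\A_0$ a quantum affine space, an iterated Ore extension and thus a domain, in which every nonzero element is automatically regular.) I expect the main obstacle to be exactly this structural input — verifying the free $\complex[Z\Zs,W\Ws]$-module decomposition of $\A_0$, equivalently that $\A_0$ is a domain — since it is precisely what rules out the cancellations in $(Z\Zs+W\Ws)e^{I}=e^{1_Z+I}+e^{1_W+I}$.
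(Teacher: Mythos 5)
Your proposal is correct, and for the essential case it takes a genuinely different route from the paper. For $Z\Zs$ and $W\Ws$ your argument coincides with the paper's: left multiplication sends the basis $\{e^I\}$ bijectively onto distinct basis elements (no phase, since $l_1=m_1=0$ resp. $j_2=k_2$ considerations make the $q$-factor trivial), so all coefficients of $a$ vanish. For $\mid-T^2$ the paper stays entirely at the level of coefficients: it expands $(\mid-T^2)a=\sum_I a_Ie^{I+1_Z}+\sum_I a_Ie^{I+1_W}$, re-indexes so that each basis element appears at most once, concludes $a_{(j,k,l,m,\epsilon)}=0$ whenever one of $j,k,l,m$ is zero, and then eliminates the remaining coefficients by iterating the recursion $a_I=-a_{I+(1,1,-1,-1)}$ until a zero index is reached. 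You instead split $\Sfourt=\A_0\oplus\A_0T$, reduce to regularity of $x+y$ in $\A_0$ (with $x=Z\Zs$, $y=W\Ws$), and prove the structural fact that $\A_0=\bigoplus_{p,r\in\integers}\complex[x,y]\,u_{p,r}$ is free over the central polynomial subalgebra $\complex[x,y]$, so that multiplication by the nonzero element $x+y$ of a domain is injective summand by summand. Both proofs are complete, and in fact your charge-sector decomposition is the invisible engine of the paper's recursion: the shift $I\mapsto I+(1,1,-1,-1)$ preserves the charges $j-k$ and $l-m$, and within a fixed charge sector the paper's induction is precisely the hands-on verification that $(x+y)f=0$ forces $f=0$ in $\complex[x,y]$. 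What your route buys: it packages the combinatorics into one reusable structural statement, it yields regularity of $Z\Zs$, $W\Ws$, and indeed of every nonzero element of $\complex[x,y]$ (or of all of $\A_0$, via the quantum-affine-space/Ore-extension remark) in one stroke, and it explains conceptually why no cancellation between the two families $e^{I+1_Z}$ and $e^{I+1_W}$ can occur. What the paper's route buys: it is entirely elementary and self-contained, using nothing beyond Lemma~\ref{lemma:basis.product} and the linear basis, at the cost of an explicit coefficient chase. One minor point you should make explicit in the reduction step: right multiplication $b\mapsto bT$ is injective on $\A_0$ (immediate, since $e^{(\Ih,0)}T=e^{(\Ih,1)}$ maps basis elements to basis elements), so that $\paraa{(x+y)a_1}T=0$ really does force $(x+y)a_1=0$; this is a one-line observation, not a gap.
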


\begin{proof}
  Let us first prove that $Z\Zs$ is not a zero divisor. Thus,
  let $a$ be an element of $\Sfourt$, given as
  \begin{align*}
    a=\sum_I a_Ie^I
  \end{align*}
  and compute (by using Lemma~\ref{lemma:basis.product})
  \begin{align*}
    &Z\Zs a=\sum_I a_Ie^{1_Z}e^I
    =\sum_Iq^{(0-0)(j-k)}a_Ie^{I+1_Z}
    =\sum_Ia_Ie^{I+1_Z}.
  \end{align*}
  Clearly, setting $Z\Zs a = 0$ gives $a_I=0$ for all $I$ since
    $\{e^I\}$ is a basis for $\Sfourt$.  Similarly,
  we consider
  \begin{align*}
    W\Ws a = \sum_Ia_Ie^{1_W}e^I
    =\sum_Iq^{(1-1)(j-k)}e^{I+1_W}=
    \sum_Ia_Ie^{I+1_W}
  \end{align*}
  and conclude that $W\Ws a=0$ gives $a=0$. Finally, we compute
  \begin{align*}
    (\mid-T^2)a &=(\Zsq+\Wsq)a
    =\sum_{I}a_I\paraa{e^{1_Z}+e^{1_W}}e^I
    =\sum_Ia_Ie^{I+1_Z}+\sum_Ia_Ie^{I+1_W}\\
    &=\sum_{j=0,\,l,m\geq 1}\!\!\!\!a_{I-1_W}e^I
    +\!\!\!\sum_{k=0,\,j,l,m\geq 1}\!\!\!\!a_{I-1_W}e^I
    +\sum_{l=0,\,j,k\geq 1}\!\!\!\!a_{I-1_Z}e^I\\
    &\qquad+\sum_{m=0,\,j,k,l\geq 1}\!\!\!\!a_{I-1_Z}e^I
    +\sum_{j,k,l,m\geq 1}\paraa{a_{I-1_Z}+a_{I-1_W}}e^I.
  \end{align*}
  Note that in the above expression, every basis element appears at
  most once.  Therefore, setting $(\mid-T^2)a=0$ immediately gives
  $a_{j,k,l,m,\epsilon}=0$ if at least one of $j,k,l,m$ is zero. If
  $j,k,l,m\geq 1$ one gets
  \begin{align*}
    a_{I-(0,0,1,1,0)}=-a_{I-(1,1,0,0,0)}\implies a_{I}=-a_{I+(1,1,-1,-1)},
  \end{align*}
  which, by iteration, gives
  \begin{align*}
    a_{I} = (-1)^na_{I+(n,n,-n,-n)} \quad\text{for }0\leq n\leq\min(l,m).
  \end{align*}
  Hence, since $a_{j,k,l,m,\epsilon}=0$ if at least one of $j,k,l,m$
  is zero, one concludes that
  \begin{align*}
    a_{(j,k,l,m,\epsilon)} = 
    \begin{cases}
      (-1)^la_{j+l,k+l,0,m-l,\epsilon}=0\text{ if }l\leq m\\
      (-1)^ma_{j+m,k+m,l-m,0,\epsilon}=0\text{ if }l\geq m\\
    \end{cases}
  \end{align*}
  which, together with the previous observation, shows that $a=0$.
\end{proof}

\noindent
We have already noted that $Z\Zs$, $W\Ws$ and $T$ are central
elements. The next results shows that if $\theta$ is an irrational
number, then these elements generate the center of $\Sfourt$.

\begin{proposition}\label{prop:algebra.center}
  If $\theta$ is irrational then $\ZSfourt$ is generated by $Z\Zs$,
  $W\Ws$ and $T$. That is, every $a\in\ZSfourt$ can be uniquely
  written as
  \begin{align*}
    a = \sum_{j,k,\epsilon}a_{jk\epsilon}(Z\Zs)^j(W\Ws)^kT^{\epsilon}
  \end{align*}
  where $a_{jk\epsilon}\in\complex$, $j,k\in\{0,1,2,\ldots\}$ and $\epsilon\in\{0,1\}$.
\end{proposition}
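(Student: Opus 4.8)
The plan is to characterize the center directly in terms of the linear basis $\{e^I\}$. Since $T$ is central by \eqref{eq:S4.def.relations}, every element of $\Sfourt$ commutes with $T$, so an arbitrary $a=\sum_I a_Ie^I$ lies in $\ZSfourt$ if and only if it commutes with the remaining generators $Z,\Zs,W,\Ws$. First I would compute the commutators $[Z,a]$ and $[W,a]$ using Lemma~\ref{lemma:basis.product}. Writing $Z=e^{(1,0,0,0,0)}$ and $W=e^{(0,0,1,0,0)}$ (both with $\epsilon=0$, so only the first case of the Lemma is ever needed), one reads off that left multiplication contributes a trivial phase while right multiplication contributes the phase $q^{(l-m)(j_2-k_2)}$, and subtracting yields
\begin{align*}
  [Z,a] &= \sum_I a_I\paraa{1-q^{l-m}}e^{I+(1,0,0,0,0)}\\
  [W,a] &= \sum_I a_I\paraa{q^{j-k}-1}e^{I+(0,0,1,0,0)}.
\end{align*}
Since the shifted multi-indices $I+(1,0,0,0,0)$ (respectively $I+(0,0,1,0,0)$) remain pairwise distinct as $I$ ranges over all multi-indices, and $\{e^I\}$ is a basis, the vanishing of each commutator is equivalent to the vanishing of every coefficient appearing in the corresponding sum.

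This is exactly where irrationality of $\theta$ enters. Because $q=e^{i2\pi\theta}$ with $\theta$ irrational, we have $q^n=1$ if and only if $n=0$. Hence $[Z,a]=0$ forces $a_I=0$ whenever $l\neq m$, and $[W,a]=0$ forces $a_I=0$ whenever $j\neq k$. (I would remark that repeating the computation for $\Zs$ and $\Ws$ reproduces precisely the same two conditions $l=m$ and $j=k$, so these impose nothing new; this confirms that commuting with $Z$ and $W$ already captures centrality.) Consequently a central element $a$ is supported only on indices of the form $(j,j,l,l,\epsilon)$. Since $Z\Zs$ and $W\Ws$ are central, I may freely reorder the corresponding basis element as $e^{(j,j,l,l,\epsilon)}=Z^j(\Zs)^jW^l(\Ws)^lT^\epsilon=(Z\Zs)^j(W\Ws)^lT^\epsilon$, which produces the claimed expansion.

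For the converse inclusion and uniqueness there is essentially nothing left: each $(Z\Zs)^j(W\Ws)^kT^\epsilon$ is manifestly central as a product of central elements, and uniqueness of the coefficients is immediate from the observation that $(Z\Zs)^j(W\Ws)^kT^\epsilon=e^{(j,j,k,k,\epsilon)}$ are distinct elements of the basis $\{e^I\}$ and therefore linearly independent. The only real work is the careful bookkeeping with the phase factors in Lemma~\ref{lemma:basis.product}; I do not anticipate a genuine obstacle, the single decisive ingredient being the use of irrationality of $\theta$ to convert $q^{l-m}=1$ and $q^{j-k}=1$ into the index constraints $l=m$ and $j=k$.
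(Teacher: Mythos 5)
Your proposal is correct and follows essentially the same route as the paper's proof: expand $a$ in the basis $\{e^I\}$, compute the commutators with $Z$ and $W$ via Lemma~\ref{lemma:basis.product}, and use irrationality of $\theta$ to force $l=m$ and $j=k$. Your additional remarks (that $\Zs$, $\Ws$ give no new conditions, and that uniqueness follows from $(Z\Zs)^j(W\Ws)^kT^\epsilon=e^{(j,j,k,k,\epsilon)}$ being distinct basis elements) are correct refinements the paper leaves implicit.
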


\begin{proof}
  Let $a$ be an arbitrary (nonzero) central element of $\Sfourt$ and write
  \begin{align*}
    a = \sum_I a_Ie^I.
  \end{align*}
  In particular, $a$ has to commute with $Z$, and one computes
  \begin{align*}
    [a,Z] = \sum_Ia_I\parab{e^Ie^{(1,0,0,0,0)}-e^{(1,0,0,0,0)}e^I}
    =\sum_Ia_I(q^{l-m}-1)e^{I+(1,0,0,0,0)}.
  \end{align*}
  Demanding that $[a,Z]=0$ gives $(q^{l-m}-1)a_I=0$. If $a\neq 0$,
  there exists an $I$ such that $a_I\neq 0$, which implies that
  $q^{l-m}=1$. Since $\theta$ is assumed to be irrational it follows
  that $l=m$. Similarly, if $a$ commutes with $W$ then 
  \begin{align*}
    0 = [a,W] = \sum_{I}a_I\parab{e^Ie^{(0,0,1,0,0)}-e^{(0,0,1,0,0)}e^I}
    =\sum_Ia_I\paraa{1-q^{j-k}}e^{I+(0,0,1,0,0)}
  \end{align*}
  giving $j=k$ in analogy with the previous case. Thus, an element
  $a\in\ZSfourt$ must be of the following form
  \begin{align*}
    a = \sum_{j,k,\epsilon}a_{j,k,\epsilon}\paraa{Z\Zs}^j\paraa{W\Ws}^kT^\epsilon,
  \end{align*}
  and it is clear that any element of the above form is in $\ZSfourt$
  since $Z\Zs$, $W\Ws$ and $T$ are central.
\end{proof}

\begin{remark}
  Note that Proposition~\ref{prop:algebra.center} does not hold if
  $\theta$ is rational. For instance, if $q^N=1$ then both $Z^N$ and
  $W^N$ are central elements.
\end{remark}

\noindent
Let us introduce
\begin{alignat*}{2}
  &X^1 = \half\paraa{Z + Z^\ast} &\qquad
  &X^2 = \frac{1}{2i}\paraa{Z-Z^\ast}\\
  &X^3 = \half\paraa{W + W^\ast} &\qquad
  &X^4 = \frac{1}{2i}\paraa{W-W^\ast}\\
  &\Zsq = ZZ^\ast\quad\Wsq = WW^\ast & & X^5=T,
\end{alignat*}
and note that $\Zsq = (X^1)^2 + (X^2)^2$ and 
$\Wsq=(X^3)^2 + (X^4)^2$, as well as
\begin{align*}
  (X^1)^2 + (X^2)^2 + (X^3)^2 + (X^4)^2 +(X^5)^2 = \Zsq+\Wsq+T^2 = \mid.
\end{align*}
Moreover, the normality of $Z$ and $W$ is equivalent to
$[X^1,X^2]=[X^3,X^4]=0$. Next, let us show that there exist
noncommutative analogues of the four derivations appearing in
\eqref{eq:classical.derivations}.

\begin{proposition}\label{prop:derivations}
  There exist hermitian derivations $\dt_1$, $\dt_2$,
  $\dt_3$, $\dt_4$ such that
  \begin{alignat*}{3}
    &\dt_1 Z = iZ &\quad &\dt_1W= 0 & &\dt_1T=0\\
    &\dt_2 Z = 0 & &\dt_2W=iW &\quad &\dt_2T=0\\
    &\dt_3 Z = Z\Wsq & &\dt_3W =-W\Zsq &\quad &\dt_3T=0\\
    &\dt_4 Z = ZT & &\dt_4W = WT & &\dt_4T = T^2-\mid,
  \end{alignat*}
  and it follows that
  \begin{align*}
    &[\dt_1,\dt_2]=[\dt_1,\dt_3]=[\dt_1,\dt_4]=0\\
    &[\dt_2,\dt_3]=[\dt_2,\dt_4]=0\\
    &[\dt_3,\dt_4]=-2T\dt_3.
  \end{align*}
\end{proposition}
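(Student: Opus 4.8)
The plan is to use the standard universal-algebra fact that a derivation of $\Sfourt$ is uniquely determined by its values on the generators, and that a prescription on generators extends to a well-defined derivation precisely when its image on each defining relation in \eqref{eq:S4.def.relations} vanishes in $\Sfourt$. Concretely, writing $\Sfourt$ as a free $\ast$-algebra modulo the two-sided ideal $I$ generated by the relations, a derivation of the free algebra descends to the quotient if and only if it maps $I$ into itself, which by the Leibniz rule reduces to checking that $\dt_i$ annihilates each generating relation. First I would record the values forced on $\Zs$ and $\Ws$ by requiring each $\dt_i$ to be hermitian, i.e.\ $\dt_i(a^\ast)=(\dt_i a)^\ast$: for instance $\dt_1\Zs=-i\Zs$, $\dt_3\Zs=\Zs\Wsq$, $\dt_3\Ws=-\Ws\Zsq$, and $\dt_4\Zs=\Zs T$, $\dt_4\Ws=\Ws T$, using throughout that $\Zsq$, $\Wsq$ and $T$ are central and hermitian.

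The core of the argument is the well-definedness check. For the commutation relations $WZ=qZW$ and $\Ws Z=\qb Z\Ws$ the factors of $q$ align exactly under $\dt_i$; for the centrality relations $[T,Z]=[Z,\Zs]=[W,\Ws]=0$ one uses that each $\dt_i$ sends central generators to central elements, so both orderings produce the same result. The most delicate case is the sphere relation $\Zsq+\Wsq+T^2=\mid$. Applying $\dt_3$ yields $2\Zsq\Wsq$ from the $\Zsq$ term and $-2\Zsq\Wsq$ from the $\Wsq$ term, which cancel; applying $\dt_4$ and using $\dt_4\Zsq=2\Zsq T$, $\dt_4\Wsq=2\Wsq T$ gives $2\Zsq T+2\Wsq T+2T(T^2-\mid)=2T\paraa{\Zsq+\Wsq+T^2-\mid}=0$. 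Hermiticity then holds by construction once one checks that $\dt_4T=T^2-\mid$ is hermitian, which is immediate from $\Ts=T$.

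Finally I would establish the bracket relations by evaluating each commutator on the three generators $Z$, $W$, $T$; since a derivation of $\Sfourt$ is determined by its action on generators, agreement on generators suffices (and commutators of derivations are again derivations). The vanishing brackets are routine verifications on $Z,W,T$. For the one nontrivial relation I would again use $\dt_4\Zsq=2\Zsq T$ and $\dt_4\Wsq=2\Wsq T$, and then check that $[\dt_3,\dt_4]$ and $-2T\dt_3$ agree on each generator: both give $-2Z\Wsq T$ on $Z$, both give $2W\Zsq T$ on $W$, and both vanish on $T$. Here one uses that $-2T\dt_3$ is genuinely a derivation, which holds because $T$ is central, so that multiplication by $T$ carries a derivation to a derivation. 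The main obstacle is organizational rather than conceptual --- keeping track of the many Leibniz expansions and centrality moves --- with the substantive point being the emergence of the factor $-2T$ in $[\dt_3,\dt_4]$, which traces back to the asymmetric action $\dt_4\Zsq=2\Zsq T$, $\dt_4\Wsq=2\Wsq T$ feeding back through $\dt_3$.
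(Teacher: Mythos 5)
Your proposal is correct and follows essentially the same route as the paper: extend the prescribed values on generators via the Leibniz rule and hermiticity, verify well-definedness by checking that each $\dt_i$ annihilates the defining relations \eqref{eq:S4.def.relations}, and confirm the bracket relations on generators. In fact your write-up is somewhat more complete than the paper's proof, which only samples two of the relation checks (e.g.\ $\dt_1$ and $\dt_3$ on $WZ-qZW$) and leaves the sphere relation and the commutator $[\dt_3,\dt_4]=-2T\dt_3$ to the reader.
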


\begin{proof}
  If the derivations exist, the relations given above
  (together with the fact that they are hermitian derivations),
  completely determine their actions via Leibniz' rule. However, for
  these derivations to be well-defined, one has to check that they
  respect the defining relations \eqref{eq:S4.def.relations} of $\Sfourt$.
  For instance
  \begin{align*}
    \dt_1(WZ-qZW) &= (\dt_1W)Z+W(\dt_1Z)-q(\dt_1Z)W-qZ(\dt_1W)\\
    &=iWZ-iqZW = i(WZ-qZW) = 0,
  \end{align*}
  and
  \begin{align*}
    \dt_3(WZ-qZW) &= (\dt_3W)Z+W(\dt_3Z)-q(\dt_3Z)W-qZ(\dt_3W)\\
    &=-W\Zsq Z + WZ\Wsq - qZ\Wsq W+qZW\Zsq\\
    &=(WZ-qZW)\Wsq - (WZ-qZW)\Zsq=0
  \end{align*}
  (using that $\Zsq$ and $\Wsq$ are central).  In this way, relations
  \eqref{eq:S4.def.relations} can be checked for the derivations
  $\dt_1,\dt_2,\dt_3,\dt_4$.
\end{proof}

\subsection{A real metric calculus over $\Sfourt$}\label{sec:real.metric.calculus}

\noindent
In this section, we shall introduce a differential calculus over
$\Sfourt$ in close analogy with the classical parametrization in
Section \ref{sec:embedding}. The calculus will be constructed in the
framework of pseudo-Riemannian calculi, as developed in
\cite{aw:curvature.three.sphere}, and briefly reviewed in
Section~\ref{sec:pseudo.riemannian.calculi}.

To this end, we introduce four elements of the free (right) module
$(\Sfourt)^5$ that correspond to the classical vector fields
$e_1,e_2,e_3,e_4$ in Section~\ref{sec:embedding}. However, in order to
properly define a connection, one needs to slightly rescale $e_1$ and
$e_2$. Thus, we consider the following elements of $(\Sfourt)^5$:
\begin{align*}
  E_1 &= (-X^2(\mid-T^2),X^1(\mid-T^2),0,0,0)\\ 
  E_2 &= (0,0,-X^4(\mid-T^2),X^3(\mid-T^2),0)\\
  E_3 &= (X^1\Wsq,X^2\Wsq,-X^3\Zsq,-X^4\Zsq,0)\\
  E_4 &= (X^1T,X^2T,X^3T,X^4T,T^2-\mid),
\end{align*}
and let $M$ be the submodule of $(\Sfourt)^5$ generated by
$\{E_1,E_2,E_3,E_4\}$.  Note that there are no ordering ambiguities when
defining these elements, since $\Zsq$, $\Wsq$ and $T$ are
central. This module is the analogue of the local vector fields over
the chart $U_0$, and the corresponding local triviality is reflected
in the following result.

\begin{proposition}
  The module $M=\{E_1a+E_2b+E_3c+E_4d:a,b,c,d\in\Sfourt\}$
  is a free (right) $\Sfourt$-module of rank $4$, and $\{E_1,E_2,E_3,E_4\}$ is a basis for $M$.
\end{proposition}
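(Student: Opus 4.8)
Since $M$ is by construction generated by $E_1,E_2,E_3,E_4$, proving that it is free of rank $4$ with $\{E_1,E_2,E_3,E_4\}$ as a basis amounts to establishing that these four generators are linearly independent over $\Sfourt$. So the plan is to assume $E_1a+E_2b+E_3c+E_4d=0$ for $a,b,c,d\in\Sfourt$, read off the five resulting component equations in $(\Sfourt)^5$, and then peel off the coefficients one at a time, the only real tools being the regularity statements of Proposition~\ref{prop:zero.divisors} together with the centrality of $\Zsq$, $\Wsq$, $T$ and $\mid-T^2$ and the normality relations $[X^1,X^2]=[X^3,X^4]=0$.

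The first and easiest step is the fifth component, which reads $(T^2-\mid)d=0$. Since $\mid-T^2$ is regular, this gives $d=0$ immediately. The next step is to extract $c$: setting $d=0$, the first two components become $-X^2(\mid-T^2)a+X^1\Wsq c=0$ and $X^1(\mid-T^2)a+X^2\Wsq c=0$. Multiplying the first on the left by $X^1$ and the second by $X^2$ and adding, the $a$-terms cancel because $X^1$ and $X^2$ commute and $\mid-T^2$ is central, leaving $\paraa{(X^1)^2+(X^2)^2}\Wsq c=\Zsq\Wsq c=0$. A product of regular central elements is again regular (if $\Zsq\Wsq x=0$ then $\Wsq x=0$ and hence $x=0$), so $c=0$.

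With $c=d=0$ in hand, the first two components simplify to $X^2(\mid-T^2)a=0$ and $X^1(\mid-T^2)a=0$; multiplying these by $X^2$ and $X^1$ respectively and adding yields $\Zsq(\mid-T^2)a=0$, whence $a=0$ by regularity of the central product $\Zsq(\mid-T^2)$. An entirely parallel argument applied to the third and fourth components, using instead $[X^3,X^4]=0$ and $\Wsq$, gives $b=0$. This exhausts the system and shows that $a=b=c=d=0$.

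The substantive point — and the only place where noncommutativity could cause trouble — is that one cannot simply ``divide out'' the coefficients $X^i$ as in the classical case; the trick is to form the symmetric combinations that exploit $[X^1,X^2]=0$ and $[X^3,X^4]=0$ so as to cancel the cross terms, after which every remaining equation is of the form $(\text{regular central element})\cdot(\text{coefficient})=0$. Consequently the one routine fact worth isolating is that products of the regular central elements $\Zsq$, $\Wsq$ and $\mid-T^2$ are themselves regular, which follows directly from Proposition~\ref{prop:zero.divisors}.
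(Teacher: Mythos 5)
Your proof is correct and follows essentially the same route as the paper's: extract $d=0$ from the fifth component, form the symmetric combinations $X^1\cdot(\text{eq}_1)+X^2\cdot(\text{eq}_2)$ using $[X^1,X^2]=0$ to isolate $\Zsq\Wsq c=0$, and then finish off $a$ and $b$ via regularity of $\Zsq$, $\Wsq$ and $\mid-T^2$. The only (immaterial) difference is that the paper cancels the regular factor $\mid-T^2$ before forming $\Zsq a=0$ and $\Wsq b=0$, whereas you keep it and invoke regularity of the product $\Zsq(\mid-T^2)$ at the end — a fact you correctly justify.
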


\begin{proof}
  By definition, $\{E_1,E_2,E_3,E_4\}$ generates $M$. To prove that
  $\{E_1,E_2,E_3,E_4\}$ is a basis, we assume that
  \begin{align}\label{eq:free.module.assumption}
    E_1a+E_2b+E_3c+E_4d=0
  \end{align}
  and show that this implies that $a=b=c=d=0$. Relation
  (\ref{eq:free.module.assumption}) is equivalent to the equations
  \begin{align*}
    -&X^2(\mid-T^2)a+X^1\Wsq c+X^1Td=0\\
    &X^1(\mid-T^2)a+X^2\Wsq c+X^2Td=0\\
    -&X^4(\mid-T^2)b-X^3\Zsq c+X^3Td=0\\
    &X^3(\mid-T^2)b-X^4\Zsq c + X^4Td=0\\
    &(\mid-T^2)d=0,
  \end{align*}
  which immediately implies that $d=0$ (since $\mid-T^2$ is not a zero
  divisor by Proposition~\ref{prop:zero.divisors}), and the remaining
  equations may be written as
  \begin{align}
    -&X^2(\mid-T^2)a+X^1\Wsq c=0\label{eq:fm.first}\\
    &X^1(\mid-T^2)a+X^2\Wsq c=0\label{eq:fm.second}\\
    -&X^4(\mid-T^2)b-X^3\Zsq c=0\\
    &X^3(\mid-T^2)b-X^4\Zsq c=0.
  \end{align}
  The sum of \eqref{eq:fm.first}, multiplied from the left with $X^1$,
  and \eqref{eq:fm.second}, multiplied from the left by $X^2$ gives
  \begin{align*}
    \paraa{(X^1)^2+(X^2)^2}\Wsq c = \Zsq\Wsq c = 0,
  \end{align*}
  (using that $[X^1,X^2]=0$) which implies that $c=0$ since neither
  $\Zsq$ nor $\Wsq$ is a zero divisor (by
  Proposition~\ref{prop:zero.divisors}). Hence, one is left with the
  equations
  \begin{alignat*}{2}
    &X^2(\mid-T^2)a = 0 &\qquad &X^1(\mid-T^2)a=0\\
    &X^4(\mid-T^2)b=0 & &X^3(\mid-T^2)b = 0,
  \end{alignat*}
  and since $\mid-T^2$ is not a zero divisor one obtains
  \begin{alignat*}{2}
    &X^2a = 0 &\qquad &X^1a=0\\
    &X^4b=0 & &X^3b = 0,
  \end{alignat*}
  giving 
  \begin{align*}
    &\paraa{(X^1)^2+(X^2)^2}a=\Zsq a=0\\
    &\paraa{(X^3)^2+(X^4)^2}b=\Wsq b=0,
  \end{align*}
  which implies that $a=b=0$. Thus, we have shown that
  $E_1a+E_2b+E_3c+E_4d=0$ necessarily gives $a=b=c=d=0$, which proves
  that $\{E_1,E_2,E_3,E_4\}$ is indeed a basis for $M$.
\end{proof}

\noindent 
In the module $M$, we introduce the restriction of the canonical
metric on $(\Sfourt)^5$:
\begin{align*}
  h(U,V) = \sum_{a,b=1}^4(U^a)^\ast h_{ab}V^b
\end{align*}
for $U=E_aU^a$ and $V=E_bV^b$, where
\begin{align*}
  h_{ab}=\sum_{i=1}^5(E_a^i)^\ast (E_b^i),
\end{align*}
giving
\begin{align*}
  (h_{ab})=
  \begin{pmatrix}
    \Zsq(\mid-T^2)^2 & 0 & 0 & 0\\
    0 & \Wsq(\mid-T^2)^2 & 0 & 0\\
    0 & 0 & \Zsq\Wsq(\mid-T^2) & 0\\
    0 & 0 & 0 & \mid-T^2
  \end{pmatrix}.
\end{align*}
As we shall be interested in perturbations of the standard metric, we
introduce
\begin{align*}
  \hd = \delta h
\end{align*}
where $\delta\in\Sfourt$ is assumed to be a hermitian, central and
regular element.  Since $\hd$ is diagonal,
and each diagonal element is regular, it follows immediately that
$\hd$ is non-degenerate on $M$; i.e.
\begin{align*}
  h(U,V)=0\text{ for all }V\in M\implies U=0.
\end{align*}
Thus, the pair $(M,\hd)$ is a metric module (cf. Definition~\ref{def:metric.module}). To construct a real metric calculus
over $(M,\hd)$ (cf. Definition~\ref{def:real.metric.calculus}),
we need to associate derivations to $E_1,E_2,E_3,E_4$. In analogy with
the classical situation, we consider the following derivations
\begin{alignat*}{2}
  &\d_1 = (\mid-T^2)\dt_1 &\quad &\d_2=(\mid-T^2)\dt_2\\
  &\d_3 = \dt_3 & &\d_4=\dt_4,
\end{alignat*}
with $\dt_1,\dt_2,\dt_3,\dt_4$ given as in
Proposition~\ref{prop:derivations}. (Note that $\d_1$ and $\d_2$ are
derivations since $\mid-T^2$ is central.) These derivations generate
an infinite-dimensional Lie algebra.
\begin{proposition}
  For $n\in\naturals_0$, the hermitian derivations
  \begin{align*}
    \dn_1=T^n(\mid-T^2)\dt_1,\quad
    \dn_2=T^n(\mid-T^2)\dt_2,\quad
    \dn_3=T^n\dt_3,\quad
    \d_4=\dt_4
  \end{align*}
  span an infinite-dimensional Lie algebra, where
  \begin{align*}
    &[\dn_1,\dn_2]=[\dn_1,\dn_3]=[\dn_2,\dn_3]=0\\
    &[\d_4,\dn_i] = (n+2)\dn[n+1]_i-n\dn[n-1]_i,
  \end{align*}
  for $i=1,2,3$ (with the convention that $n\dn[n-1]_i= 0$ if
  $n=0$). Moreover, it follows that
  \begin{alignat*}{3}
    &\d_1\Zsq = 0 & &\d_1\Wsq = 0 & &\d_1(\mid-T^2)=0\\
    &\d_2\Zsq = 0 & &\d_2\Wsq = 0 & &\d_2(\mid-T^2)=0\\
    &\d_3\Zsq = 2\Zsq\Wsq &\quad &\d_3\Wsq=-2\Zsq\Wsq &\quad &\d_3(\mid-T^2)=0\\
    &\d_4\Zsq = 2\Zsq T & &\d_4\Wsq = 2\Wsq T &
    &\d_4(\mid-T^2) = 2T(\mid-T^2),
  \end{alignat*}
  where $\d_i\equiv \dn[0]_i$ for $i=1,2,3$.
\end{proposition}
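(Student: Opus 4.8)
The plan is to exploit that every generator $\dn[n]_i$ arises from one of the basic derivations $\dt_i$ of Proposition~\ref{prop:derivations} by multiplication with a central, self-adjoint polynomial in $T$. First I would record the elementary fact that if $d$ is a hermitian derivation and $a\in\Sfourt$ is central and hermitian, then the map $x\mapsto a\,d(x)$ is again a hermitian derivation. Since $T$ is central and self-adjoint, the prefactors $T^n(\mid-T^2)$ and $T^n$ are central and hermitian, so each $\dn[n]_i$ (and $\d_4=\dt_4$) is a hermitian derivation.

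The computational engine is the commutator identity for scaled derivations: for central elements $a,b\in\Sfourt$ and derivations $d,d'$,
\begin{align*}
  [a\,d,\,b\,d'] = ab\,[d,d'] + a\,d(b)\,d' - b\,d'(a)\,d,
\end{align*}
which follows from the Leibniz rule once one uses that $a$ and $b$ are central. Since $d(b)$ and $d'(a)$ are again polynomials in $T$, hence central, the right-hand side is again a combination of scaled derivations of the required type. For $i,j\in\{1,2,3\}$ this identity immediately yields $[\dn[n]_i,\dn[m]_j]=0$: by Proposition~\ref{prop:derivations} the derivations $\dt_1,\dt_2,\dt_3$ mutually commute, and each of them annihilates every polynomial in $T$ (as $\dt_iT=0$), so all three terms vanish.

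The heart of the argument is the bracket with $\d_4=\dt_4$, where taking $a=\mid$ reduces the identity to $[\dt_4,\,b\,\dt_i]=b\,[\dt_4,\dt_i]+\dt_4(b)\,\dt_i$. From $\dt_4T=T^2-\mid$ I would first record
\begin{align*}
  \dt_4(T^n)=-nT^{n-1}(\mid-T^2)\qqand \dt_4(\mid-T^2)=2T(\mid-T^2).
\end{align*}
For $i=1,2$ one has $[\dt_4,\dt_i]=0$, so with $b=T^n(\mid-T^2)$ the entire contribution comes from $\dt_4(b)=(\mid-T^2)\paraa{(n+2)T^{n+1}-nT^{n-1}}$, giving exactly $(n+2)\dn[n+1]_i-n\dn[n-1]_i$. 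For $i=3$ instead $b=T^n$ and the nontrivial structure constant $[\dt_4,\dt_3]=2T\dt_3$ enters, so the coefficient $(n+2)$ is assembled from $2T^{n+1}$ together with the $nT^{n+1}$ coming out of $\dt_4(T^n)$; again the result collapses to $(n+2)\dn[n+1]_i-n\dn[n-1]_i$. Reconciling these two different mechanisms and confirming that both produce the same clean coefficient is the step requiring the most care.

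Finally, the bracket relations show that the real span of $\{\dn[n]_i:n\in\naturals_0,\,i=1,2,3\}\cup\{\d_4\}$ is closed under the commutator, hence is a Lie algebra; it is infinite-dimensional because the $\dn[n]_3=T^n\dt_3$ are $\reals$-linearly independent. Indeed, a relation $\sum_n c_n\dn[n]_3=0$ evaluated on $Z$ reads $\paraa{\sum_n c_nT^n}Z\Wsq=0$, which by regularity of $\Wsq$ (Proposition~\ref{prop:zero.divisors}) and uniqueness of the basis $\{e^I\}$ forces $\sum_n c_nT^n=0$; the powers $\{T^n\}_{n\geq0}$ are themselves independent since, via $T^2=\mid-\Zsq-\Wsq$, distinct powers expand into distinct $\complex$-combinations of the basis elements $(\Zsq)^j(\Wsq)^kT^\epsilon$. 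The asserted actions on $\Zsq$, $\Wsq$ and $\mid-T^2$ then follow by direct application of the Leibniz rule, using the hermiticity of the $\dt_i$ (so that $\dt_i(Z^\ast)=(\dt_iZ)^\ast$) and the centrality of $\Zsq$, $\Wsq$ and $T$; for instance $\d_3\Zsq=(\dt_3Z)Z^\ast+Z\,\dt_3(Z^\ast)=Z\Wsq Z^\ast+Z\Wsq Z^\ast=2\Zsq\Wsq$.
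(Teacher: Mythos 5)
Your proof is correct and takes essentially the same route as the paper, whose entire proof is the remark that the claims follow by straightforward computation from Proposition~\ref{prop:derivations}; your commutator identity $[a\,d,b\,d']=ab[d,d']+a\,d(b)\,d'-b\,d'(a)\,d$ for centrally-scaled derivations is precisely the clean way to organize those computations, and your case analysis (trivial bracket plus $\dt_4(T^n(\mid-T^2))$ for $i=1,2$ versus the structure constant $[\dt_4,\dt_3]=2T\dt_3$ combining with $\dt_4(T^n)$ for $i=3$) reproduces the stated coefficients exactly. If anything, your write-up is more complete than the paper's, since you also substantiate the infinite-dimensionality claim by proving $\reals$-linear independence of the derivations $T^n\dt_3$, a point the paper's one-line proof leaves entirely implicit.
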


\begin{proof}
  The proof consists of straight-forward computations using the
  definition of $\dt_1,\dt_2,\dt_3,\dt_4$ in Proposition~\ref{prop:derivations}.
\end{proof}

\noindent
We let $\g$ denote the (real) Lie algebra spanned by
$\dn_1,\dn_2,\dn_3,\d_4$, and let $\varphi:\g\to M$ be the
$\reals$-linear map defined by
\begin{align*}
  &\varphi(\dn_i)= E_iT^n\quad\text{for }i=1,2,3,\\
  &\varphi(\d_4)=E_4.
\end{align*}
The pair $(\g,\varphi)$ is denoted by $\gphi$.

\begin{proposition}
  The triple $(M,\hd,\gphi)$ is a real metric calculus over $\Sfourt$.
\end{proposition}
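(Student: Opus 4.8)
The plan is to verify the two defining conditions of a real metric calculus in Definition~\ref{def:real.metric.calculus}. All the structural prerequisites are already established: $(M,\hd)$ has been shown to be a metric $\Sfourt$-module, the derivations $\dn_1,\dn_2,\dn_3,\d_4$ span a real Lie algebra of hermitian derivations, and $\varphi:\g\to M$ is $\reals$-linear by construction. Thus it remains only to check that (1) the image $\Mphi=\varphi(\g)$ generates $M$ as an $\Sfourt$-module, and (2) $\hd(E,E')^\ast=\hd(E,E')$ for all $E,E'\in\Mphi$.

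Condition (1) is immediate. Since $E_i=\varphi(\dn[0]_i)$ for $i=1,2,3$ (recalling $T^0=\mid$) and $E_4=\varphi(\d_4)$, the four elements $E_1,E_2,E_3,E_4$ all belong to $\Mphi$. As these generate $M$ by definition, so does $\Mphi$.

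For condition (2), I would first reduce to the spanning set $\{E_iT^n : i=1,2,3,\ n\in\naturals_0\}\cup\{E_4\}$ of $\Mphi$. Because these combine only with real coefficients, and real scalars are fixed by $\ast$, the conjugate-linearity of $\hd$ in its first argument becomes honest $\reals$-linearity; hence $\hd$ is $\reals$-bilinear on $\Mphi$ and it suffices to check self-adjointness on pairs of spanning elements. Using that the matrix $(h_{ab})$ is diagonal, that $T=T^\ast$ is central, and that $\delta$ is hermitian and central, one computes $\hd(E_iT^n,E_jT^m)=\delta_{ij}\,\delta\,T^{n+m}h_{ii}$, with $\hd(E_4,E_4)=\delta(\mid-T^2)$ and all remaining pairings vanishing. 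Since each $h_{ii}$ is a product of the commuting self-adjoint central elements $\Zsq$, $\Wsq$ and $\mid-T^2$, every nonzero entry is a product of mutually commuting self-adjoint elements (powers of $T$, the relevant $h_{ii}$, and $\delta$), and is therefore self-adjoint.

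The argument is routine once the preceding structural results are granted; the only step requiring genuine care is the reduction in condition (2), namely the observation that restricting to real linear combinations turns the sesquilinear form $\hd$ into a genuinely bilinear one, so that self-adjointness need only be verified on the spanning generators rather than on arbitrary elements of $\Mphi$.
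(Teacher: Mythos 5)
Your proof is correct and follows essentially the same route as the paper: generation of $M$ is immediate because $E_1,\dots,E_4$ lie in $\Mphi$, and hermiticity of $\hd$ on $\Mphi$ follows from the diagonal form of the metric with hermitian, central entries. The paper states the second point more tersely ("since every component of $\hd$ is hermitian\ldots"); your explicit reduction via $\reals$-bilinearity on real spans and the computation $\hd(E_iT^n,E_jT^m)=\delta_{ij}\,\delta\,T^{n+m}h_{ii}$ simply spells out what the paper leaves implicit.
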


\begin{proof}
  As already noted, the metric $\hd$ is non-degenerate on $M$ and, by
  definition, $\{E_1,E_2,E_3,E_4\}$ generates $M$, which implies that
  the image of $\varphi$ generates $M$. Finally, since every component
  of $\hd$ is hermitian, it follows that $\hd(E,E')$ is hermitian for
  all $E,E'$ in the image of $\varphi$. This shows that the triple
  $(M,\hd,\gphi)$ satisfies all the requirements of a real metric
  calculus.
\end{proof}

\noindent
Given a real metric calculus $(M,\hd,\gphi)$, there exists at most one
metric and torsion-free connection on the module $M$
(cf. Theorem~\ref{thm:levi.civita}). In
Section~\ref{sec:pseudo.Riemannian} we proceed to show that such a
connection exists, but let us first discuss certain aspects of
localization on $\Sfourt$.

\subsection{The local algebra $\Sfourtloc$}\label{sec:local.algebra}

For the classical $4$-sphere, the vector fields corresponding to
$E_1,E_2,E_3,E_4$ are linearly independent in the chart given in
Section~\ref{sec:embedding}. Thus, as already mentioned, the module
$M$ does not correspond to the module of vector fields of $S^4$, but
rather to a local trivialization in the chart $U_0$. In this chart,
the functions $|w|^2$, $|z|^2$ and $1-t^2$ are invertible, and in
analogy with this situation we shall introduce a localization of the
algebra $\Sfourt$ in order to be able to perform computations in a
``noncommutative chart''. Moreover, let us also consider the inverse
of $\mid+T^2$ (which is globally invertible in the classical setting)
as it is an algebraic prototype of the kind of perturbations of the
metric that we will consider.  To this end, we let $S$ be the
multiplicative subset of $\Sfourt$ generated by $\mid$, $\Zsq$,
$\Wsq$, $\mid-T^2$ and $\mid+T^2$. Since every element of $S$ is
central, $S$ trivially fulfills the left (and right) Ore condition
\cite{o:lineareqnc}. Hence, the localization of $\Sfourt$ at $S$
exists, and we denote it by $\Sfourtloc$. In other words, $\Sfourtloc$
is constructed from $\Sfourt$ by adding the formal inverses of $\Zsq$,
$\Wsq$, $\mid-T^2$ and $\mid+T^2$. Clearly, $(M,\hd,\gphi)$, as
constructed above, is also a real metric calculus over
$\Sfourtloc$. In what follows, we shall discuss the two algebras in
parallel.

Let us take a closer look at the structure of the noncommutative
localization we have introduced. The algebra $\Sfourt$ has been
localized to include elements, which are classically not globally
defined, and the corresponding free module $M$ has been defined, which we
claim to be the local trivialization of the module of vector fields. Now,
is there a global module of vector fields, for which $M$ is a
localization? For the noncommutative 4-sphere, a particular projective
module presents itself as a natural candidate.  Defining
$\P:(\Sfourt)^5\to(\Sfourt)^5$ as
\begin{align}\label{eq:P.def}
  \P(U)=\sum_{j=1}^5\paraa{\delta^{ij}\mid-X^iX^j}U^j
\end{align}
where $U=e_iU^i$, it is easy to check that $\P^2=\P$ since
\begin{align*}
  (X^1)^2+(X^2)^2+(X^3)^2+(X^4)^2+(X^5)^2 = \mid.
\end{align*}
Let us denote the image of $\P$ by $\TSfourt$, which is, by
definition, a finitely generated projective module.  In classical
geometry, $\P$ is the projector that defines the module of vector
fields on $S^4$. Let us now show that, over the local algebra
$\Sfourtloc$, this module is isomorphic to the module of the real
metric calculus we have previously constructed.

\begin{proposition}\label{prop:local.global.isomorphic}
  The modules $\TSfourt$ and $M$ are isomorphic as
  right $\Sfourtloc$-modules.
\end{proposition}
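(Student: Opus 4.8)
The plan is to exhibit an explicit isomorphism $\Phi:\TSfourt\to M$ of right $\Sfourtloc$-modules. The module $M$ is free with basis $\{E_1,E_2,E_3,E_4\}$, while $\TSfourt=\Im(\P)$ is a submodule of $(\Sfourt)^5$ cut out by the projector $\P$. The natural first step is to observe that both modules sit inside $(\Sfourtloc)^5$: indeed $M\subseteq(\Sfourt)^5\subseteq(\Sfourtloc)^5$, and I claim $M\subseteq\TSfourt$ already over $\Sfourt$. To check this, I would verify that each generator $E_a$ satisfies $\P(E_a)=E_a$, i.e.\ that $\sum_j X^jE_a^j=0$ for $a=1,2,3,4$. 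For example, for $E_1=(-X^2(\mid-T^2),X^1(\mid-T^2),0,0,0)$ one computes $-X^1X^2(\mid-T^2)+X^2X^1(\mid-T^2)=0$ using $[X^1,X^2]=0$; the remaining three generators reduce similarly to the centrality relations and the identity $\sum_i(X^i)^2=\mid$. Hence the inclusion $M\hookrightarrow\TSfourt$ is a well-defined module map.

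The second step is to show this inclusion becomes surjective after localizing. Over $\Sfourtloc$ the elements $\Zsq$, $\Wsq$, $\mid-T^2$ (and $\mid+T^2$) are invertible, so the rescaled vectors
\begin{align*}
  \frac{E_1}{\mid-T^2},\quad
  \frac{E_2}{\mid-T^2},\quad
  \frac{E_3}{\Zsq\Wsq}\text{ (suitably)},\quad
  \frac{E_4}{\mid-T^2}
\end{align*}
are available, matching the classical picture in which $e_1,\dots,e_4$ span the tangent space over the chart $U_0$. To prove surjectivity I would take an arbitrary $U=\P(V)\in\TSfourt$ and show it lies in the $\Sfourtloc$-span of the $E_a$. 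The cleanest route is to produce, for each standard generator of the projective module, an explicit preimage: the range of $\P$ is spanned by the five columns $\P(e_i)=e_i-X^i\sum_j X^j e_j$ (noncommutatively, with care about ordering, but $X^i$ are built from the central $\Zsq,\Wsq,T$ together with $X^1,X^2,X^3,X^4$), and I would express each $\P(e_i)$ as an $\Sfourtloc$-linear combination of $E_1,\dots,E_4$. Equivalently, since both $\TSfourt\otimes\Sfourtloc$ and $M$ are projective of the same ``rank'' four, it suffices to check that the localized inclusion has an inverse, which I would construct by the explicit rescaling above together with the relation $\sum_i X^iE_a^i=0$ used to eliminate the fifth component.

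The final step is injectivity of $\Phi$ together with checking it is $\Sfourtloc$-linear, both of which are routine: linearity is inherited from the inclusion, and injectivity on the free module $M$ follows because the only way $\sum_a E_a c_a=0$ in $\TSfourt$ is the same relation as in $M$, which forces $c_a=0$ by the basis property. I expect the \emph{main obstacle} to be bookkeeping in the surjectivity step: one must solve for coefficients $c_a\in\Sfourtloc$ expressing an arbitrary $\P(e_i)$ in terms of the $E_a$, and this requires inverting a $4\times4$ (effectively diagonal, by the form of $(h_{ab})$) change-of-basis matrix whose entries involve $\Zsq$, $\Wsq$ and $\mid-T^2$. The key point making this work is precisely that these entries are invertible in $\Sfourtloc$ but \emph{not} in $\Sfourt$ — which is why the isomorphism holds only after localization, reflecting the fact that $M$ is the local trivialization of the globally projective module $\TSfourt$ over the noncommutative chart.
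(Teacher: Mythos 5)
Your proposal follows essentially the same route as the paper: first check $\P(E_a)=E_a$ (so $M\subseteq\TSfourt$), then obtain $\TSfourt\subseteq M$ over the localization by expressing each generator $\P(e_i)$, $i=1,\dots,5$, as an $\Sfourtloc$-linear combination of $E_1,\dots,E_4$, which works precisely because $\Zsq$, $\Wsq$ and $\mid-T^2$ are invertible in $\Sfourtloc$. The only difference is that the paper actually exhibits these five linear combinations and verifies one of them component by component --- the bookkeeping you deferred --- so your plan is correct and complete modulo that routine computation.
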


\begin{proof}
  First of all, it is easy to check that $E_1,E_2,E_3,E_4\in\TSfourt$;
  for instance,
  \begin{align*}
    \sum_{i=1}^5X^iE_1^i = X^1(-X^2)+X^2X^1 = 0,
  \end{align*}
  since $[X^1,X^2] = 0$, which implies that $\P(E_1)=E_1$ and
  $E_1\in\TSfourt$. Thus, it follows that $M\subseteq \TSfourt$. Next,
  we will show that $\TSfourt\subseteq M$, by explicitly writing
  $\P(e_i)$ (for $i=1,2,3,4,5$) as linear combinations of
  $E_1,E_2,E_3,E_4$. Since $\{\P(e_i)\}_{i=1}^5$ generates $\TSfourt$,
  this shows that every element of $\TSfourt$ can be written in terms
  of $E_1,E_2,E_3,E_4$. We claim that
  \begin{align*}
    \P(e_1) &= 
              -E_1X^2\Zsqi\tfi
              +E_3X^1\Zsqi\tfi
              +E_4X^1T\tfi\\
    \P(e_2) &= 
              E_1X^1\Zsqi\tfi
              +E_3X^2\Zsqi\tfi
              +E_4X^2T\tfi\\
    \P(e_3) &=
              -E_2X^4\Wsqi\tfi
              -E_3X^3\Wsqi\tfi
              +E_4X^3T\tfi\\
    \P(e_4) &=
              E_2X^3\Wsqi\tfi
              -E_3X^4\Wsqi\tfi
              +E_4X^4T\tfi\\
    \P(e_5) &= -E_4.
  \end{align*}
  Let us show that $\P(e_1)$ can be written as the linear combination
  given above. The proof of the other four identities is
  analogous. First, one checks that
  \begin{align*}
    \P(e_1) = \paraa{1-(X^1)^2,-X^2X^1,-X^3X^1,-X^4X^1,-X^5X^1}.
  \end{align*}
  Next, write
  \begin{align*}
    U&=-E_1X^2\Zsqi\tfi+E_3X^1\Zsqi\tfi+E_4X^1T\tfi\\
     &=(U^1,U^2,U^3,U^4,U^5),
  \end{align*}
  and compute the components one by one
  \begin{align*}
    U^1 &= (X^2)^2\Zsqi+(X^1)^2\Wsq\Zsqi\tfi+(X^1)^2T^2\tfi\\
    &=(X^2)^2\Zsqi+(X^1)^2\Wsq\Zsqi\tfi\\
    &\qquad-(X^1)^2(\mid-T^2)\tfi+(X^1)^2\tfi\\
    &=-(X^1)^2+\Zsqi\tfi\parab{(X^2)^2(\mid-T^2)+(X^1)^2(\Zsq+\Wsq)}\\
    &=\Big/ \text{using }\Zsq+\Wsq+T^2=\mid \Big /\\
    &=-(X^1)^2+\Zsqi\tfi\parab{(X^2)^2(\mid-T^2)+(X^1)^2(\mid-T^2)}\\
    &=-(X^1)^2+\Zsqi\paraa{(X^1)^2+(X^2)^2}=\mid-(X^1)^2,\\
    U^2 &= -X^1X^2\Zsqi + X^2X^1\Wsq\Zsqi\tfi+X^2X^1T^2\tfi\\
    &= \Big/ \text{using }[X^1,X^2]=0 \Big /\\
    &= -X^2X^1\Zsqi\tfi\paraa{\mid-T^2-\Wsq}+X^2X^1T^2\tfi\\
    &=-X^2X^1\Zsqi\tfi\Zsq+X^2X^1T^2\tfi\\
    &=-X^2X^1\tfi\paraa{\mid-T^2} = -X^2X^1,\\
    U^3 &= -X^3X^1\tfi+X^3X^1T^2\tfi\\
    &=-X^3X^1\tfi(\mid-T^2) = -X^3X^1,\\
    U^4 &= -X^4X^1\tfi+X^4X^1T^2\tfi\\
    &=-X^4X^1\tfi(\mid-T^2) = -X^4X^1,\\
    U^5 &= (T^2-\mid)X^1T\tfi=-X^1T = -X^1X^5.
  \end{align*}
  Thus, we have shown that 
  \begin{align*}
    \P(e_1) &= -E_1X^2\Zsqi\tfi+E_3X^1\Zsqi\tfi+E_4X^1T\tfi,
  \end{align*}
  which, together with the other four analogous computations, shows
  that $\TSfourt$ is contained in $M$. Combined with the fact that
  $M\subseteq \TSfourt$ one can conclude that $\TSfourt=M$ as right
  $\Sfourtloc$-modules.
\end{proof}

\subsection{Pseudo-Riemannian calculus}\label{sec:pseudo.Riemannian}

To construct a connection $\nabla$ on $M$, such that
$(M,\hd,\gphi,\nabla)$ is a pseudo-Riemannian calculus, we consider
the following class of perturbations.  Let us assume that
\begin{align*}
  \d_a\delta = 2\alpha_a\delta,
\end{align*}
where $\alpha_a\in\Sfourtloc$ is hermitian, for $a=1,2,3,4$. The
connection will be constructed over $\Sfourtloc$, but we shall see that
perturbations in certain directions give connections over
$\Sfourt$. 

\begin{proposition}\label{prop:4sphere.connection}
  Let $\delta\in\Sfourtloc$ be a hermitian, regular and central element, such
  that $\d_a\delta=2\alpha_a\delta$,
  for $a=1,2,3,4$, where $\alpha_a\in\Sfourtloc$ and
  $\alpha_a^\ast=\alpha_a$. Then there exists a unique connection
  $\nabla$, such that $(M,\hd,\gphi,\nabla)$ is a pseudo-Riemannian
  calculus over $\Sfourtloc$, and $\nabla$ is given by
  \begin{align*}
    \nabla_1E_1 &= 
    E_1\alpha_1 - E_2\alpha_2\Zsq\Wsqi
    -E_3\paraa{\alpha_3\Wsqi+\mid}(\mid-T^2)\\
    &\qquad-E_4(\alpha_4+T)\Zsq(\mid-T^2)\\
    \nabla_1E_2 &= \nabla_2E_1 = E_1\alpha_2+E_2\alpha_1\\
    \nabla_1E_3 &= \nabla_3E_1 = E_1(\alpha_3+\Wsq) + E_3\alpha_1\\
    \nabla_1E_4 &= E_1(\alpha_4+T)+E_4\alpha_1\\
    \nabla_4E_1 &= E_1(\alpha_4+3T)+E_4\alpha_1\\
    \nabla_2E_2 &= 
      -E_1\alpha_1\Wsq\Zsqi + E_2\alpha_2
      -E_3\paraa{\alpha_3\Zsqi-\mid}(\mid-T^2)\\
    &\qquad-E_4(\alpha_4+T)\Wsq(\mid-T^2)\\
    \nabla_2E_3&=\nabla_3E_2=E_2(\alpha_3-\Zsq)+E_3\alpha_2\\
    \nabla_2E_4 &= E_2(\alpha_4+T)+E_4\alpha_2\\
    \nabla_4E_2 &= E_2(\alpha_4+3T)+E_4\alpha_2\\
    \nabla_3E_3 &= -E_1\alpha_1\Wsq(\mid-T^2)^{-1}
                  -E_2\alpha_2\Zsq(\mid-T^2)^{-1}\\
                &\qquad+E_3(\alpha_3+\Wsq-\Zsq)-E_4(\alpha_4+T)\Zsq\Wsq\\
    \nabla_3E_4 &= E_3(\alpha_4+T)+E_4\alpha_3\\
    \nabla_4E_3 &= E_3(\alpha_4+3T)+E_4\alpha_3\\
    \nabla_4E_4 &= 
                  -E_1\alpha_1\Zsqi(\mid-T^2)^{-1}
                  -E_2\alpha_2\Wsqi(\mid-T^2)^{-1}\\
                &\qquad-E_3\alpha_3\Zsqi\Wsqi+E_4(\alpha_4+T),
  \end{align*}
  and
  \begin{align*}
    \nabla_{\dn_i}E_a=\paraa{\nabla_iE_a}T^n
  \end{align*}
  for $i=1,2,3$, $a=1,2,3,4$, where $\nabla_a\equiv\nabla_{\d_a}$.
\end{proposition}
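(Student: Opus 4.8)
The plan is to exploit the uniqueness guaranteed by Theorem~\ref{thm:levi.civita}: since the real metric calculus $(M,\hd,\gphi)$ admits at most one metric and torsion-free connection, it suffices to \emph{exhibit} the formulas above and verify that they do define an affine connection satisfying the metric and torsion-free conditions of Definition~\ref{def:pseudo.riemannian.calculus}. Thus existence and uniqueness collapse into a single verification. First I would confirm that the stated assignments extend to a well-defined affine connection on $(M,\g)$: since $\{E_1,E_2,E_3,E_4\}$ is a basis for the free module $M$, one may freely prescribe $\nabla_{\d_a}E_b$ as elements of $M$, and then extend by $\reals$-linearity in the derivation argument and by the Leibniz rule $\nabla_d(Ea)=(\nabla_dE)a+E(d\,a)$ in the module argument. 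The only genuine consistency requirement comes from the extra generators $\dn_i$ for $n\geq 1$ and from the Lie-algebra relations of the preceding proposition; here the formula $\nabla_{\dn_i}E_a=(\nabla_iE_a)T^n$ must be checked against $[\d_4,\dn_i]=(n+2)\dn[n+1]_i-n\dn[n-1]_i$, ensuring the prescription respects the bracket structure of $\g$.

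Next I would verify the \emph{torsion-free} condition $\nablasub{d_1}\varphi(d_2)-\nablasub{d_2}\varphi(d_1)-\varphi([d_1,d_2])=0$ on pairs of basis derivations. Because $\varphi(\dn_i)=E_iT^n$ and $\varphi(\d_4)=E_4$, this reduces to finitely many identities among the $E_a$'s, which are already visible in the symmetry of the formulas: the listed expressions satisfy $\nabla_1E_2=\nabla_2E_1$, $\nabla_1E_3=\nabla_3E_1$, $\nabla_2E_3=\nabla_3E_2$, and $\nabla_3E_4$, $\nabla_4E_3$ differ precisely by the $2T\dt_3$-type term matching $[\d_3,\d_4]=-2T\d_3$. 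I would systematically run through the six pairs $(a,b)$ and the mixed $\dn_i$ cases, using $\d_i\varphi(\d_j)$ computed via the Leibniz rule and the derivation actions on $\Zsq,\Wsq,\mid-T^2$ recorded earlier.

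The \emph{metric compatibility} condition $d\,h_\delta(U,V)=\hd(\nablad U,V)+\hd(U,\nablad V)$ is where the perturbation hypothesis $\d_a\delta=2\alpha_a\delta$ enters essentially, and I expect this to be the main obstacle. Since $\hd=\delta h$ with $\delta$ central, one has $\d_a\hd(E_b,E_c)=(\d_a\delta)h_{bc}+\delta\,\d_a(h_{bc})=2\alpha_a\,\delta\,h_{bc}+\delta\,\d_a(h_{bc})$, and the $2\alpha_a$ factor is exactly what the $\alpha_a$-terms in each $\nabla_aE_b$ are designed to absorb. The verification amounts to checking, for each derivation $\d_a$ and each pair $(E_b,E_c)$, that $\hd(\nabla_aE_b,E_c)+\hd(E_b,\nabla_aE_c)$ reproduces this expression; the diagonal metric $(h_{bc})$ and the centrality of $\Zsq,\Wsq,T$ keep the bookkeeping tractable, but the $(\mid-T^2)^{\pm1}$ factors and the coupling between the $\alpha$-terms and the geometric terms (e.g. the $\Wsq-\Zsq$ and $\Zsq\Wsq$ contributions in $\nabla_3E_3$ and $\nabla_4E_4$) require care. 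Finally I would confirm the \emph{real connection} property $\hd(\nabla_dE,E')^\ast=\hd(\nabla_dE,E')$ on $\Mphi$, which follows from the hermiticity of each $\alpha_a$ together with the hermiticity of the metric components. Having checked all three conditions, the exhibited $\nabla$ is a pseudo-Riemannian connection, and Theorem~\ref{thm:levi.civita} guarantees it is the unique one.
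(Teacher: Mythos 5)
Your proposal is correct, and it shares the paper's outer skeleton: exhibit the formulas, verify that they work, and let Theorem~\ref{thm:levi.civita} collapse existence and uniqueness into that single verification. But the verification itself is organized along a genuinely different route. The paper never checks the metric and torsion-free conditions of Definition~\ref{def:pseudo.riemannian.calculus} directly; it instead verifies that the stated components solve Kozul's formula \eqref{eq:Kozul.Uab} (checking, for instance, that the four quantities $K_a$ built from $\nabla_1E_1$ all vanish, the other components being analogous), and then invokes Corollary~3.8 of \cite{aw:curvature.three.sphere}, which guarantees that on a free module any family $U_{ab}$ solving Kozul's formula defines a connection making the calculus pseudo-Riemannian. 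The identity $\nabla_{\dn_i}E_a=(\nabla_iE_a)T^n$ is likewise \emph{derived} in the paper from Kozul's formula together with non-degeneracy of $\hd$, rather than prescribed as part of the connection. Your route is more elementary in that it does not rely on the cited corollary, but it is correspondingly longer: metricity, torsion-freeness and the reality condition $\hd(\nabla_dE,E')^\ast=\hd(\nabla_dE,E')$ must each be checked separately, and each check must run over the infinitely many derivations $\dn_i$, all of which Kozul's formula packages into one algebra-valued computation per component. Both routes correctly locate where the hypothesis $\d_a\delta=2\alpha_a\delta$ enters (through $\d_a\hd(E_b,E_c)=2\alpha_a\delta h_{bc}+\delta\,\d_a h_{bc}$) and where hermiticity of the $\alpha_a$ is used. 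One conceptual correction to your first step: well-definedness of an affine connection imposes no ``consistency with the bracket structure of $\g$''. Since $\g$ is a real vector space with basis $\{\dn_1,\dn_2,\dn_3\}_{n\geq 0}\cup\{\d_4\}$ and $M$ is free, any assignment on basis derivations extends by $\reals$-linearity and the Leibniz rule; the Lie brackets enter only through the torsion condition. Your observation that $\nabla_3E_4-\nabla_4E_3=-2E_3T=\varphi([\d_3,\d_4])$, and the matching cancellation for the pairs $(\d_4,\dn_i)$ using $[\d_4,\dn_i]=(n+2)\dn[n+1]_i-n\dn[n-1]_i$, are exactly the right computations, but they belong to the torsion check, not to well-definedness.
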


\begin{proof}
  Let us recall (cf. \cite{aw:curvature.three.sphere}) that Kozul's formula
  \begin{equation}\label{eq:kozul.formula}
    \begin{split}
      2h&(\nablasub{d_1}E_2,E_3)=
      d_1h(E_2,E_3)+d_2h(E_3,E_1)-d_3h(E_1,E_2)\\
      &\quad
      -h\paraa{E_1,\varphi([d_2,d_3])}
      +h\paraa{E_2,\varphi([d_3,d_1])}
      +h\paraa{E_3,\varphi([d_1,d_2])},
    \end{split}
  \end{equation}
  where $E_1,E_2,E_3\in\Mphi$ and $d_1,d_2,d_3\in\g$, gives a
  straight-forward way of finding a connection on $M$ such that
  $(M,\hd,\gphi,\nabla)$ is a pseudo-Riemannian calculus. Namely, if
  one finds $U_{ab}\in M$ such that
  \begin{equation}\label{eq:Kozul.Uab}
    \begin{split}
      2h&(U_{ab},E_c)=
      \d_ah(E_b,E_c)+\d_bh(E_a,E_c)-\d_ch(E_a,E_b)\\
      &\quad
      -h\paraa{E_a,\varphi([\d_b,\d_c])}
      +h\paraa{E_b,\varphi([\d_c,\d_a])}
      +h\paraa{E_c,\varphi([\d_a,\d_b])}
    \end{split}
  \end{equation}
  for all $a,b,c\in\{1,2,3,4\}$ then (since the module $M$ is free)
  one may set $\nabla_aE_b=U_{ab}$, and it follows that
  $(M,\hd,\gphi,\nabla)$ is a pseudo-Riemannian calculus (see
  Corollary~3.8 in \cite{aw:curvature.three.sphere}). It is
  straight-forward to check that the expressions given in
  Proposition~\ref{prop:4sphere.connection} fulfill
  \eqref{eq:Kozul.Uab}. For instance, to check Kozul's formula for
  $\nabla_1E_1$ one sets
  \begin{align*}
    K_a = 
    \hd(\nabla_1E_1,E_a)-\d_1\hd(E_1,E_a)+\half\d_a\hd(E_1,E_1)
    +\hd\paraa{E_1,\varphi([\d_1,\d_a])}
  \end{align*}
  which gives
  \begin{align*}
    K_1 &= \hd(\nabla_1E_1,E_1) -\alpha_1\delta\Zsq(\mid-T^2)^2\\
    &= \alpha_1\hd(E_1,E_1)-\alpha_1\delta\Zsq(\mid-T^2)^2 \\
    &= \alpha_1\delta\Zsq(\mid-T^2)^2-\alpha_1\delta\Zsq(\mid-T^2)^2 =0,\\
    K_2 &= \hd(\nabla_1E_1,E_2) +\alpha_2\delta\Zsq(\mid-T^2)^2\\
    &=-\alpha_2\Zsq\Wsqi\hd(E_2,E_2)+\alpha_2\delta\Zsq(\mid-T^2)^2\\
    &=-\alpha_2\Zsq\Wsqi\Wsq(\mid-T^2)^2+\alpha_2\delta\Zsq(\mid-T^2)^2=0,\\
    K_3 &=\hd(\nabla_1E_1,E_3)+\thalf\d_3\paraa{\delta\Zsq(\mid-T^2)^2}\\
    &=-(\mid+\alpha_3\Wsqi)(\mid-T^2)\hd(E_3,E_3)+\paraa{\alpha_3\delta\Zsq
    +\delta\Zsq\Wsq}(\mid-T^2)^2\\
    &=-(\mid+\alpha_3\Wsqi)\delta\Zsq\Wsq(\mid-T^2)^2+\paraa{\alpha_3\delta\Zsq
    +\delta\Zsq\Wsq}(\mid-T^2)^2=0, \\
    K_4 &=
    \hd(\nabla_1E_1,E_4)+\thalf\d_4\paraa{\delta\Zsq(\mid-T^2)^2}
    -\hd(E_1,E_1)2T\\
    &=-(\alpha_4+T)\delta\Zsq(\mid-T^2)^2
    +\paraa{\alpha_4+3T}\delta\Zsq(\mid-T^2)^2-2\delta\Zsq
    T(\mid-T^2)^2\\
    &=0.
  \end{align*}
  This shows that $\nabla_1E_1$ satisfies Kozul's formula \eqref{eq:Kozul.Uab}. The other
  connection components can be checked in an analogous way.

  Let us now consider the claim that
  \begin{align*}
    \nabla_{\dn_i}E_a=\paraa{\nabla_iE_a}T^n.
  \end{align*}
  This fact is easily derived from Kozul's formula. Namely, one notes
  that
  \begin{align*}
    \varphi\paraa{[\d_a,\dn[n]_i]}=\varphi\paraa{[\d_a,\d_i]}T^n+E_i(\d_aT^n)
  \end{align*}
  and computes using Kozul's formula:
  \begin{align*}
    2\hd\paraa{&\nablasub{\dn[n]_i}E_b,E_c}
    =\paraa{\d_i\hd(E_b,E_c)}T^n+\d_b\paraa{\hd(E_c,E_iT^n)}-\d_c\paraa{\hd(E_iT^n,E_b)}\\
    &\qquad -\hd\paraa{E_i,\varphi([\d_b,\d_c])}T^n
    +\hd\paraa{E_b,\varphi([\d_c,\dn[n]_i])}
    +\hd\paraa{E_c,\varphi([\dn[n]_i,\d_b])}\\
    &=\paraa{\d_i\hd_{bc}+\d_b\hd_{ci}-\d_c\hd_{ib}}T^n
    +\hd_{ci}(\d_bT^n)-\hd_{ib}(\d_cT^n)-\hd\paraa{E_i,\varphi([\d_b,\d_c])}T^n\\
    &\qquad+\hd\paraa{E_b,\varphi([\d_c,\d_i])}T^n+\hd_{bi}(\d_cT^n)
    +\hd\paraa{E_c,\varphi([\d_i,\d_b])}T^n-\hd_{ci}(\d_bT^n)\\
    &=2\hd\paraa{\nablasub{\d_i}E_b,E_c}T^n=2\hd\paraa{(\nabla_{\d_i}E_b)T^n,E_c},
  \end{align*}
  using that $\hd_{ab}=\hd_{ba}$ and the fact that $T$ is hermitian
  and central. Since the metric is non-degenerate, it follows that
  \begin{equation*}
    \nablasub{\dn[n]_i}E_b=(\nabla_{\d_i}E_b)T^n.
    \qedhere
  \end{equation*}
\end{proof}

\noindent Note that if $\alpha_1=\alpha_2=\alpha_3=0$, the connection
in Proposition~\ref{prop:4sphere.connection} only involves elements of
$\Sfourt$ and is therefore a valid connection for
$(M,\hd,\gphi,\nabla)$ over $\Sfourt$. In particular, this is true for
the unperturbed metric; i.e. for $\delta=\mid$.

In Section~\ref{sec:local.algebra} we constructed the projective
module $\TSfourt$ and showed that it is isomorphic to $M$ (as a right
$\Sfourtloc$-module) in
Proposition~\ref{prop:local.global.isomorphic}. As is well known, a
projective module defined by a projector $\P$, admits a connection
of the form
\begin{align*}
  \nablab_{\d}U = \P\paraa{e_i\d(U^i)}
\end{align*}
which is compatible with the canonical metric on the free
module. Thus, having argued that one may regard the module $M$ as a
localization of the (global) module $\TSfourt$, it is natural to ask
if the connection on $\TSfourt$, defined in the above manner,
coincides with the connection found in
Proposition~\ref{prop:4sphere.connection} for the unperturbed metric.

\begin{proposition}\label{prop:connections.coincide}
  Let $U=e_iU^i$ be an element of $\TSfourt=\P((\Sfourt)^5)$  (as
  defined in \eqref{eq:P.def}) and set
  \begin{align*}
    \nablab_{a}U=\P\paraa{e_i\d_a(U^i)},
  \end{align*}
  for $a=1,2,3,4$. Then $\nablab_aE_b=\nabla_aE_b$ for $a,b=1,2,3,4$
  and $\delta=\mid$. 
\end{proposition}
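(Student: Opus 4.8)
The plan is to deduce the equality from uniqueness rather than by brute force. Setting $\delta=\mid$ forces $\alpha_a=0$ in Proposition~\ref{prop:4sphere.connection} (since $\d_a\mid=0=2\alpha_a\mid$), so $\nabla$ is \emph{the} unique connection making $(M,\hd,\gphi)$ a pseudo-Riemannian calculus at $\delta=\mid$. By Proposition~\ref{prop:local.global.isomorphic} we have $\TSfourt=M$ as right $\Sfourtloc$-modules, so $\nablab_aU=\P\paraa{e_i\d_a(U^i)}$ lands in $\Im\P=M$ and defines an affine connection on $(M,\g)$ (the Leibniz rule $\nablab_d(Uc)=(\nablab_dU)c+U(dc)$ is immediate from right-linearity of $\P$ and $U=\P U$). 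Hence it suffices to verify that $\nablab$ is itself metric and torsion-free; Theorem~\ref{thm:levi.civita} then gives $\nablab=\nabla$ on all of $M$, in particular $\nablab_aE_b=\nabla_aE_b$.

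Metric compatibility comes essentially for free. The restriction of the canonical form $h_{\mathrm{can}}(U,V)=\sum_{i=1}^5(U^i)^\ast V^i$ to $\TSfourt$ agrees, under the isomorphism $\TSfourt\cong M$, with $\hd$ at $\delta=\mid$, because $h_{ab}=\sum_i(E_a^i)^\ast E_b^i=h_{\mathrm{can}}(E_a,E_b)$. Moreover $\P$ is self-adjoint for $h_{\mathrm{can}}$: since each $X^i$ is hermitian, $\paraa{\delta^{ij}\mid-X^iX^j}^\ast=\delta^{ij}\mid-X^jX^i$, and summing over the repeated indices gives $h_{\mathrm{can}}(\P U,V)=h_{\mathrm{can}}(U,\P V)$. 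For $U,V\in\Im\P$ this yields $h_{\mathrm{can}}(\nablab_dU,V)+h_{\mathrm{can}}(U,\nablab_dV)=\sum_i\paraa{(dU^i)^\ast V^i+(U^i)^\ast dV^i}=d\,h_{\mathrm{can}}(U,V)$, i.e.\ $\nablab$ is metric.

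For torsion-freeness I would first reduce the infinite tower of derivations to the four generators. Both connections are $T^n$-homogeneous: Proposition~\ref{prop:4sphere.connection} gives $\nabla_{\dn_i}E_a=(\nabla_iE_a)T^n$, while for $\nablab$ one has $\dn_i=T^n\d_i$ with $T$ central, so $\nablab_{\dn_i}E_a=\P\paraa{e_kT^n\d_i(E_a^k)}=(\nablab_iE_a)T^n$. It therefore suffices to check the torsion condition $\nablab_a\varphi(\d_b)-\nablab_b\varphi(\d_a)=\varphi([\d_a,\d_b])$ for $a,b\in\{1,2,3,4\}$. The brackets among $\d_1,\d_2,\d_3$ vanish, while $[\d_4,\d_i]=2\dn[1]_i$ gives $\varphi([\d_4,\d_i])=2E_iT$; each case is a finite computation with $\P$. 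Together with the observation that the resulting connection coefficients are hermitian (whence $h(\nablab_dE,E')$ is hermitian and $\nablab$ is a genuine real connection calculus), this shows $\nablab$ is a pseudo-Riemannian calculus.

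The main obstacle is thus the concrete evaluation of $\nablab_aE_b=\P\paraa{e_i\d_a(E_b^i)}$, which is pure bookkeeping: one differentiates the components $E_b^i$, applies the projector, and then repeatedly uses $\Zsq+\Wsq+T^2=\mid$ to recognise the output as a combination of $E_1,\dots,E_4$. As an illustration, for $\nablab_1E_1$ one finds $e_i\d_1(E_1^i)=\paraa{-X^1(\mid-T^2)^2,-X^2(\mid-T^2)^2,0,0,0}$; applying $\P$ and simplifying via $\Zsq+\Wsq+T^2=\mid$ yields $\nablab_1E_1=-E_3(\mid-T^2)-E_4T\Zsq(\mid-T^2)$, which is precisely $\nabla_1E_1$ at $\delta=\mid$. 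The remaining components are analogous. Indeed, since this same computation already exhibits the connection coefficients, one may equally well bypass the uniqueness argument and directly match all components of $\nablab_aE_b$ against Proposition~\ref{prop:4sphere.connection} at $\alpha_a=0$; the computational content is identical.
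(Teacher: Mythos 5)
Your proposal is correct, but its primary route is genuinely different from the paper's. The paper proves the proposition by direct computation: it evaluates all sixteen components $\nablab_aE_b=\P\paraa{e_i\d_a(E_b^i)}$, simplifies using $\Zsq+\Wsq+T^2=\mid$, and matches them one by one against the formulas of Proposition~\ref{prop:4sphere.connection} with $\alpha_a=0$ (only $\nablab_1E_1$ is written out, the rest being declared analogous) --- exactly the ``direct matching'' you relegate to your last paragraph. Your main argument instead goes through uniqueness: show that $(M,\hd,\gphi,\nablab)$ is itself a pseudo-Riemannian calculus over $\Sfourtloc$ at $\delta=\mid$ and invoke Theorem~\ref{thm:levi.civita}. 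What this buys is that metric compatibility comes structurally for free: $\hd|_{\delta=\mid}$ is the restriction of the canonical form, $\P$ is self-adjoint for it (your argument is right, and note it survives the fact that the $X^i$ do \emph{not} all commute, since $(X^iX^j)^\ast=X^jX^i$ is precisely the transposed entry of $\P$), and the derivations are hermitian. What it costs is that you must verify the remaining axioms, and two of your steps are asserted rather than proved. First, reducing torsion-freeness to the four generators needs slightly more than $T^n$-homogeneity: for pairs $(\d_4,\dn_i)$ the Leibniz term $E_i\d_4(T^n)$ must cancel against the full bracket $[\d_4,\dn_i]=(n+2)\dn[n+1]_i-n\dn[n-1]_i$, not merely against $[\d_4,\d_i]=2\dn[1]_i$; this does work out (the mismatch is exactly $-2E_iT^{n+1}$, i.e.\ the generator-level torsion multiplied by $T^n$), but it deserves a line. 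Second, the reality condition $\hd(\nablab_dE,E')=\hd(\nablab_dE,E')^\ast$ is an independent axiom of a pseudo-Riemannian calculus (it is what drives the Koszul-type uniqueness argument), and your justification via ``the resulting connection coefficients are hermitian'' presupposes the very computation you are trying to avoid; it can alternatively be seen from the fact that, for each fixed $i$, the elements $\d_a(E_b^i)$ and $E_c^i$ lie in a commutative $\ast$-subalgebra of hermitian elements. Since torsion-freeness still forces you to evaluate the antisymmetrized off-diagonal $\P\paraa{e_i\d_a(E_b^i)}$, the computational savings over the paper are real but partial; in exchange, your route explains conceptually \emph{why} the Grassmann connection of $\P$ must coincide with the connection of Proposition~\ref{prop:4sphere.connection}, rather than exhibiting the coincidence as a numerical accident.
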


\begin{proof}
  Let us prove the statement by computing $\nablab_aE_b$ for
  $a,b=1,2,3,4$ (i.e. 16 components in total) and compare it with
  Proposition~\ref{prop:4sphere.connection} for $\delta=\mid$. Since
  the calculations are straight-forward we shall only present one of
  them here to illustrate how they are performed. Thus,
  \begin{align*}
    \nablab_1E_1 &=
    \P\paraa{\d_1(-X^2(\mid-T^2),X^1(\mid-T^2),0,0,0)}\\
    &=\P\paraa{(-X^1,-X^2,0,0,0)}(\mid-T^2)^2\\
    &=(-X^1,-X^2,0,0,0)(\mid-T^2)^2-e_iX^i\paraa{-(X^1)^2-(X^2)^2}(\mid-T^2)^2\\
    &=(-X^1,-X^2,0,0,0)(\mid-T^2)^2+(X^1,X^2,X^3,X^4,T)\Zsq(\mid-T^2)^2\\
    &=\paraa{X^1(\Zsq-\mid),X^2(\Zsq-\mid),X^3\Zsq,X^4\Zsq,T}(\mid-T^2)^2.
  \end{align*}
  Now, for comparison, we find $\nabla_1E_1$ from
  Proposition~\ref{prop:4sphere.connection} when $\delta=\mid$:
  \begin{align*}
    \nabla_1E_1 &=
    -E_3(\mid-T^2)-E_4T\Zsq(\mid-T^2)\\
    &=-(X^1\Wsq,X^2\Wsq,-X^3\Zsq,-X^4\Zsq,0)(\mid-T^2)\\
    &\qquad-(X^1T,X^2T,X^3T,X^4T,T^2-\mid)T\Zsq(\mid-T^2)\\
    &=\Big/\Wsq+T^2\Zsq=\mid-\Zsq-T^2+T^2\Zsq=(\mid-T^2)(\mid-\Zsq)\Big/\\
    &=-\paraa{X^1(\mid-\Zsq),X^2(\mid-\Zsq),-X^3\Zsq,-X^4\Zsq,-T}(\mid-T^2)^2
  \end{align*}
  which equals $\nablab_1E_1$. The remaining computations are done in
  an analogous way.
\end{proof}

\section{The Gauss-Chern-Bonnet theorem}\label{sec:gcb}

\subsection{The trace}\label{sec:trace}

\noindent
Just as for the noncommutative torus, one may introduce a linear
functional on $\Sfourt$ corresponding to integration on the classical
manifold. Namely, for a given basis
element $e^I$ with $I=(j,k,l,m,\epsilon)$ (in the notation of
Section~\ref{sec:basic.properties}) one defines a linear map
$\phi:\Sfourt\to C^{\infty}(S^4)$ via
\begin{align*}
  \phi(e^I) = e^{i(j-k)\xi_1}\paraa{\cos\varphi\cos\psi}^{j+k}
  e^{i(l-m)\xi_2}\paraa{\sin\varphi\cos\psi}^{l+m}(\sin\psi)^\epsilon
\end{align*}
and
\begin{align*}
  \tau(e^I)=
  \int_0^{2\pi}d\xi_1\int_0^{2\pi}d\xi_2\int_{-\pi/2}^{\pi/2}d\psi
  \int_0^{\pi/2}d\varphi\,\phi(e^I)\sin\varphi\cos\varphi\cos^3\psi,
\end{align*}
which are extended to $\Sfourt$ as linear maps
(cp. \cite{s:dynamical.spheres} for a similar approach in the
unperturbed case). The volume
element of the round metric $g_0$ on $S^4$ is given by
$\sin\varphi\cos\varphi\cos^3\psi\,d\xi_1d\xi_2 d\psi d\varphi$ and
for the perturbed metric $\delta g_0$ one obtains
\begin{align*}
  dV = \delta^2\sin\varphi\cos\varphi\cos^3\psi\,d\xi_1d\xi_2 d\psi d\varphi.
\end{align*}
In order to reflect the fact that one would like to integrate with respect to the
perturbed metric, we introduce
\begin{align*}
  \td(a) = \tau\paraa{\delta a\delta}.
\end{align*}
Let us note a few properties of the linear functional $\td$.  We start
with the following lemma:

\begin{lemma}\label{lemma:eI.zero.trace}
  Assume that $\theta\notin\rationals$ and $\delta\in\ZSfourt$. If $e^I\notin\ZSfourt$ then
  $\td(e^I)=0$.
\end{lemma}

\begin{proof}
  Let us start by considering $\td(e^I)$ when $I=(j,k,l,m,0)$.
  Assuming that $\delta\in\ZSfourt$ and $\theta\notin\rationals$, one
  may write 
  \begin{align*}
    \delta^2 = \sum_{i_1i_2\epsilon}a_{i_1i_2\epsilon}(\Zsq)^{i_1}\Wsq)^{i_2}T^\epsilon
  \end{align*}
  by Proposition~\ref{prop:algebra.center}, and
  \begin{align*}
    \td(e^I) &= \sum_{i_1i_2\epsilon}a_{i_1i_2\epsilon}
    \tau\paraa{e^{(j,k,l,m,0)}(\Zsq)^{i_1}(\Wsq)^{i_2}T^\epsilon}\\
    &=\sum_{i_1i_2\epsilon}a_{i_1i_2\epsilon}
    \tau\paraa{e^{(j+i_1,k+i_1,l+i_2,m+i_2,\epsilon)}}.
  \end{align*}
  Since 
  \begin{align*}
    \int_0^{2\pi}d\xi_1\int_0^{2\pi}d\xi_2e^{ik_1\xi_1}e^{ik_2\xi_2}=
    \begin{cases}
      4\pi^2\text{ if }k_1=k_2=0,\\
      0 \text{ otherwise},
    \end{cases}
  \end{align*}
  we conclude that $\td(e^{(j,k,l,m,0)})=0$ if $j\neq k$ or $l\neq m$,
  which is equivalent to $e^{(j,k,l,m,0)}\notin\ZSfourt$. Similarly,
  for $I=(j,k,l,m,1)$, terms proportional to $a_{i_1i_2 1}$ are of the
  form
  \begin{align*}
    a_{i_1i_2 1}\tau&\parab{e^{(j+i_1,k+i_1,l+i_2,m+i_2,0)}
      -e^{(j+i_1+1,k+i_1+1,l+i_2,m+i_2,0)}\\
      &\qquad-e^{(j+i_1,k+i_1,l+i_2+1,m+i_2+1,0)}}
  \end{align*}
  which, by using the same argument as above, implies that
  $\td(e^{(j,k,l,m,1)})=0$ if $j\neq k$ or $l\neq m$.
\end{proof}

\begin{proposition}
  If $\delta\in\ZSfourt$ and $\theta\notin\rationals$, then $\td$
  satisfies
  \begin{enumerate}
  \item $\td([a,b])=0$,
  \item $\td(a^\ast)=\overline{\td(a)}$,
  \end{enumerate}
  for all $a,b\in\Sfourt$.
\end{proposition}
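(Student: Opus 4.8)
The plan is to prove both properties by reducing them to facts about the linear functional $\tau$ and the structure of central elements. The key observation is that since $\delta\in\ZSfourt$ is central, we have $\td(a)=\tau(\delta a\delta)=\tau(\delta^2 a)$, using centrality of $\delta$ to collect the two factors. Thus $\td$ differs from $\tau$ only by multiplication by the central element $\delta^2$, and the work reduces to understanding $\tau$.

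For property (1), I would first establish that $\tau([a,b])=0$ for all $a,b$, i.e.\ that $\tau$ is already a trace. By linearity it suffices to check this on basis elements, so I would compute $\tau(e^{I_1}e^{I_2})-\tau(e^{I_2}e^{I_1})$ using Lemma~\ref{lemma:basis.product}. The two products $e^{I_1}e^{I_2}$ and $e^{I_2}e^{I_1}$ differ only by the scalar factor $q^{(l_1-m_1)(j_2-k_2)}$ versus $q^{(l_2-m_2)(j_1-k_1)}$ (with the same underlying basis elements appearing, since index addition is commutative). Hence
\begin{align*}
  \tau(e^{I_1}e^{I_2})-\tau(e^{I_2}e^{I_1})
  = \paraa{q^{(l_1-m_1)(j_2-k_2)}-q^{(l_2-m_2)(j_1-k_1)}}\tau(e^{I_1+I_2}),
\end{align*}
(with the analogous three-term expression in the $\epsilon_1+\epsilon_2=2$ case). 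The point is that $\tau(e^J)$ vanishes unless the $\xi_1$ and $\xi_2$ frequencies are zero, i.e.\ unless $j=k$ and $l=m$ in the exponent $J$; but when $j_1+j_2=k_1+k_2$ and $l_1+l_2=m_1+m_2$, the two phase factors both collapse to the same power of $q$ (one checks $(l_1-m_1)(j_2-k_2)=(l_2-m_2)(j_1-k_1)$ modulo the vanishing constraint), so the prefactor difference is zero precisely on the terms that survive. Therefore $\tau([a,b])=0$, and since $\delta^2$ is central, $\td([a,b])=\tau(\delta^2[a,b])=\tau([\delta^2 a,b])=0$ as well.

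For property (2), the statement $\td(a^\ast)=\overline{\td(a)}$ again reduces, via centrality and hermiticity of $\delta$ (so $(\delta^2)^\ast=\delta^2$), to showing $\tau(a^\ast)=\overline{\tau(a)}$. On a basis element one has $(e^I)^\ast$ equal (up to a $q$-phase) to another basis element with the roles of $j,k$ and of $l,m$ interchanged, and I would verify directly from the definition of $\phi$ and $\tau$ that $\phi((e^I)^\ast)=\overline{\phi(e^I)}$: conjugating $e^{i(j-k)\xi_1}$ swaps $j$ and $k$, the real factors $(\cos\varphi\cos\psi)^{j+k}$ and $(\sin\varphi\cos\psi)^{l+m}$ are invariant under this swap, and $\sin\psi$ is real. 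Integrating a real weight against $\overline{\phi(e^I)}$ then yields $\overline{\tau(e^I)}$.

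The main obstacle is the bookkeeping in property (1): one must carefully track the $q$-phase factors from Lemma~\ref{lemma:basis.product}, handle the three-term case when $\epsilon_1+\epsilon_2=2$, and confirm that on exactly those multi-indices where $\tau$ is nonzero the two phase prefactors agree, so that their difference annihilates the surviving terms. Everything else is a routine verification once the reduction $\td(\cdot)=\tau(\delta^2\,\cdot)$ is in hand.
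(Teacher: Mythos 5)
Your reduction $\td(a)=\tau(\delta^2a)$ and your treatment of property (1) are correct, and they rest on the same core computation as the paper: the commutator of two basis elements via Lemma~\ref{lemma:basis.product}, vanishing of the trace unless $j_1+j_2=k_1+k_2$ and $l_1+l_2=m_1+m_2$, and the phase identity $(l_1-m_1)(j_2-k_2)=(l_2-m_2)(j_1-k_1)$ on the surviving terms (both exponents equal $-(l_1-m_1)(j_1-k_1)$). Your organization through $\tau$ rather than $\td$ is a small but genuine improvement: the paper goes through Lemma~\ref{lemma:eI.zero.trace}, which needs $\theta\notin\rationals$ (via Proposition~\ref{prop:algebra.center}) to expand $\delta^2$ in the central generators, whereas the vanishing of $\tau(e^J)$ for $j\neq k$ or $l\neq m$ is immediate from the $\xi_1,\xi_2$ integrals, so your part (1) uses neither irrationality nor the lemma, only centrality of $\delta$ in the identity $\delta^2[a,b]=[\delta^2a,b]$.

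Part (2), however, contains a false step: the claimed identity $\phi\paraa{(e^I)^\ast}=\overline{\phi(e^I)}$ does not hold. As your own parenthetical concedes, $(e^I)^\ast$ is only a $q$-phase times a basis element; reordering with the commutation relations gives $(e^I)^\ast=q^{(j-k)(l-m)}e^{I'}$ with $I'=(k,j,m,l,\epsilon)$, so that
\begin{align*}
  \phi\paraa{(e^I)^\ast}=q^{(j-k)(l-m)}\,\overline{\phi(e^I)},
\end{align*}
and the phase is nontrivial whenever $j\neq k$ and $l\neq m$ (for example $(ZW)^\ast=q\Zs\Ws$). This phase is precisely where the difficulty of property (2) sits; if $\phi$ genuinely intertwined $\ast$ with complex conjugation, the statement would be trivial and would need no hypothesis on $\theta$ or $\delta$. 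The repair uses the tool you already employed in (1): $\tau(e^{I'})=\overline{\tau(e^I)}$, and this vanishes unless $j=k$ and $l=m$, while on exactly those balanced indices $q^{(j-k)(l-m)}=1$; hence $\tau\paraa{(e^I)^\ast}=\overline{\tau(e^I)}$ for every $I$, so $\tau(b^\ast)=\overline{\tau(b)}$, which your reduction (using $\delta^\ast=\delta$) converts into $\td(a^\ast)=\overline{\td(a)}$. This is exactly the bookkeeping the paper performs when it writes $\td(a^\ast)=\sum_Iq^{(j-k)(l-m)}\overline{a_I}\td(e^I)$ and then discards the unbalanced terms; as written, your proposal skips over the very point that makes (2) nontrivial.
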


\begin{proof}
  To prove (1), we show that $\td([e^{I_1},e^{I_2}])=0$. By using
  Lemma~\ref{lemma:basis.product} one obtains
  \begin{align*}
    \td\paraa{[e^{I_1},e^{I_2}]} &=
    \parab{q^{(l_1-m_1)(j_2-k_2)}-q^{(l_2-m_2)(j_1-k_1)}}
    \td\paraa{e^{I_1+I_2}}
  \end{align*}
  if $\epsilon_1+\epsilon_2\leq 1$, and
  \begin{equation}\label{eq:td.com}
    \begin{split}
      \td([e^{I_1},e^{I_2}]) &=
      \parab{q^{(l_1-m_1)(j_2-k_2)}-q^{(l_2-m_2)(j_1-k_1)}}\times\\
      &\quad
      \paraa{e^{(\Ih_1+\Ih_2,0)}-e^{(\Ih_1+\Ih_2+\oneh_Z,0)}-e^{(\Ih_1+\Ih_2+\oneh_W,0)}}
    \end{split}
  \end{equation}
  if $\epsilon_1+\epsilon_2=2$. From Lemma~\ref{lemma:eI.zero.trace} it
  follows that if $j_1+j_2\neq k_1+k_2$ or $l_1+l_2\neq m_1+m_2$ then
  $\td([e^{I_1},e^{I_2}])=0$. On the other hand, if $j_1+j_2=k_1+k_2$
  and $l_1+l_2=m_1+m_2$ then
  \begin{align*}
    (l_1-m_1)(j_2-k_2)=(l_2-m_2)(j_1-k_1)
  \end{align*}
  which gives $\td([e^{I_1},e^{I_2}])=0$ from \eqref{eq:td.com}.

  For (2), we again consider $a=\sum_Ia_Ie^I$ and find
  \begin{align*}
    \td(a^\ast) = \sum_I\overline{a_I}\td\paraa{(e^I)^\ast}
    =\sum_Iq^{(j-k)(l-m)}\overline{a_I}\td(e^I).
  \end{align*}
  Since $\td(e^I)=0$ if $j\neq k$ or $l\neq m$ (by
  Lemma~\ref{lemma:eI.zero.trace}), the above sum equals
  \begin{align*}
    \td(a^\ast) = \sum_I\overline{a_I}\td(e^I)=\overline{\td(a)}
  \end{align*}
  using that $\td(e^I)\in\reals$ when $j=k$ and $l=m$.
\end{proof}

\noindent For the forthcoming discussion of the Gauss-Chern-Bonnet
theorem, we extend $\td$ to the commutative subalgebra
$\Zloc\subseteq\Sfourtloc$ given by
\begin{align*}
  \Zloc = \complex\angles{\mid,\Zsq,\Zsqi,\Wsq,\Wsqi,
  T,(\mid-T^2)^{-1},(\mid+T^2)^{-1}},
\end{align*}
by defining a homomorphism (of commutative $\ast$-algebras)
$\phi_0:\Zloc\to C^\infty(U_0)$ as
\begin{alignat*}{2}
  &\phi_0(\Zsq) = \cos^2(\varphi)\cos^2(\psi) &\qquad &\phi_0(\Wsq) = \sin^2(\varphi)\cos^2(\psi)\\
  &\phi_0(\mid) = 1 & &\phi_0(T) = \sin(\psi)
\end{alignat*}
as well as
\begin{align*}
  &\phi_0\paraa{(\mid-T^2)^{-1}}=\frac{1}{\cos^2(\psi)}
    =\frac{1}{\phi_0(\mid-T^2)}\\
  &\phi_0\paraa{(\mid+T^2)^{-1}}=\frac{1}{1+\sin^2(\psi)}
    =\frac{1}{\phi_0(\mid+T^2)}\\
  &\phi_0\paraa{\Zsqi}=\frac{1}{\cos^2(\varphi)\cos^2(\psi)}
    =\frac{1}{\phi_0(\Zsq)}\\
  &\phi_0\paraa{\Wsqi}=\frac{1}{\sin^2(\varphi)\cos^2(\psi)}
    =\frac{1}{\phi_0(\Wsq)}.
\end{align*}
For $\phi_0$ to be well-defined, one needs to check that the above
definition is compatible with the relations in $\Zloc$. The only
nontrivial relation to check is
\begin{align*}
  \phi_0(\Zsq+\Wsq+T^2-\mid) &= \cos^2(\varphi)\cos^2(\psi)
  +\sin^2(\varphi)\cos^2(\psi)+\sin^2(\psi)-1\\
  &=\cos^2(\psi)+\sin^2(\psi)-1 = 0,
\end{align*}
which shows that $\phi_0$ is indeed well-defined. Note that $\phi_0$
coincides with $\phi$ on $\ZSfourt$. Finally, for $\delta\in\Zloc$,
we define 
\begin{align*}
  \tdloc(a) = 
  \int_0^{2\pi}d\xi_1\int_0^{2\pi}d\xi_2\int_{-\pi/2}^{\pi/2}d\psi
  \int_0^{\pi/2}d\varphi\,\phi_0(a)\phi_0(\delta^2)\cos^3\psi\sin\varphi\cos\varphi,
\end{align*}
for $a\in\Zloc$, whenever the above integral is convergent. (For
instance, the integral does not exists when $a=(\mid-T^2)^{-2}$.)

\subsection{The Gauss-Chern-Bonnet theorem}\label{sec:sub.gcb}

\noindent 
For a closed surface $\Sigma$, the Gauss-Bonnet theorem states that
the integral of the Gaussian curvature over $\Sigma$ is proportional
to the Euler characteristic of $\Sigma$. This provides an important
link between topology and Riemannian geometry. In particular, since
the Euler characteristic is independent of any metric tensor, the
integral gives the same value if we perturb the metric. This
theorem has been generalized to closed even dimensional Riemannian
manifolds, where the scalar curvature is replaced by the Pfaffian of
the curvature form.  In case of a closed four dimensional manifold
$M$, the Gauss-Chern-Bonnet theorem states that
\begin{align}\label{eq:gcb.formula}
  \chi(M) = \frac{1}{32\pi^2}\int_{M}\paraa{R^{abcd}R_{abcd}-4\Ric_{ab}\Ric^{ab}+S^2}d\mu
\end{align}
where $R_{abcd}$ is the Riemann curvature tensor, $\Ric_{ab}$ is the
Ricci curvature, $S$ denotes the scalar curvature and $\chi(M)$ is the
Euler characteristic of $M$. (Recall that $\chi(S^4)=2$.)  In this
section, we will show that there exists an analogue of the
Gauss-Chern-Bonnet theorem for the pseudo-Riemannian calculus of
$\Sfourt$ we have developed. Our approach is based on the fact that
all coefficients of the curvature tensor lie in the commutative
subalgebra $\Zloc$, which allows us to compute directly the
  Pfaffian of the curvature form.

Let us consider a metric perturbation $\delta\in\Zloc$ that is a
polynomial in $T$, and such that $\delta$ is invertible in $\Zloc$.
It follows that $\alpha_1=\alpha_2=\alpha_3=0$ (in the notation of
Section~\ref{sec:pseudo.Riemannian}), since
$\d_1T=\d_2T=\d_3T=0$. Moreover,
\begin{align*}
  \d_4\delta = \delta'(T)(\d_4T)\delta^{-1}\delta
  =-\delta'(T)(\mid-T^2)\delta^{-1}\delta
\end{align*}
where $\delta'(T)$ denotes the (formal) derivative of the polynomial
$\delta(T)$ with respect to $T$, which implies that
\begin{align*}
  \alpha\equiv\alpha_4 = -\half(\mid-T^2)\delta'\delta^{-1}.
\end{align*}
An example of such a perturbation is given by $\delta=(\mid+T^2)^N$ which
gives 
\begin{align*}
 \alpha = -NT(\mid-T^2)(\mid+T^2)^{-1}. 
\end{align*}
Moreover, by $\alpha'$ we shall denote the (formal) derivative of
$\alpha(T)$ with respect to $T$.  For easy reference, let us recall
the formulas from Proposition~\ref{prop:4sphere.connection} in the
situation where $\alpha_1=\alpha_2=\alpha_3=0$:
\begin{align*}
  \nabla_1E_1 &= -E_3(\mid-T^2)-E_4(\alpha+T)\Zsq(\mid-T^2) \\
  \nabla_2E_2 &= E_3(\mid-T^2)-E_4(\alpha+T)\Wsq(\mid-T^2)\\
  \nabla_3E_3 &=E_3(\Wsq-\Zsq)-E_4(\alpha+T)\Zsq\Wsq\\
  \nabla_4E_4 &= E_4(\alpha+T)
\end{align*}
\begin{alignat*}{2}
  \nabla_1E_2&=\nabla_2E_1=0 &\qquad
  \nabla_1E_3&=\nabla_3E_1=E_1\Wsq\\ 
  \nabla_1E_4 &= E_1(\alpha+T) &
  \nabla_4E_1 &= E_1(\alpha+3T)\\
  \nabla_2E_3 &=-E_2\Zsq & \nabla_3E_2&=-E_2\Zsq\\
  \nabla_2E_4 &= E_2(\alpha+T) &
  \nabla_4E_2 &= E_2(\alpha+3T) \\
  \nabla_3E_4 &= E_3(\alpha+T) &
  \nabla_4E_3 &= E_3(\alpha+3T).
\end{alignat*}
It is now straight-forward to compute the curvature:
\begin{align*}
  R(\d_1,\d_2)E_1 &= -E_2\paraa{\mid-(\alpha+T)^2}\Zsq(\mid-T^2)\\
  R(\d_1,\d_2)E_2 &= E_1\paraa{\mid-(\alpha+T)^2}\Wsq(\mid-T^2)\\
  R(\d_1,\d_2)E_3 &= 0\qquad\quad R(\d_1,\d_2)E_4 = 0\\
  R(\d_1,\d_3)E_1 &= -E_3\paraa{\mid-(\alpha+T)^2}\Zsq(\mid-T^2)\\
  R(\d_1,\d_3)E_3 &= E_1\paraa{\mid-(\alpha+T)^2}\Zsq\Wsq\\
  R(\d_1,\d_3)E_2 &= 0\qquad\quad R(\d_1,\d_3)E_4 = 0\\
  R(\d_1,\d_4)E_1 &= -E_4\paraa{\mid+\alpha'}\Zsq(\mid-T^2)^2\\
  R(\d_1,\d_4)E_4 &= E_1(\mid+\alpha')(\mid-T^2)\\
  R(\d_1,\d_4)E_2 &= 0\qquad\quad R(\d_1,\d_4)E_3 = 0\\  
  R(\d_2,\d_3)E_2 &= -E_3\paraa{\mid-(\alpha+T)^2}\Wsq(\mid-T^2)\\
  R(\d_2,\d_3)E_3 &= E_2\paraa{\mid-(\alpha+T)^2}\Zsq\Wsq\\
  R(\d_2,\d_3)E_1 &= 0\qquad\quad R(\d_2,\d_3)E_4 = 0\\
  R(\d_2,\d_4)E_2 &= -E_4(\mid+\alpha')\Wsq(\mid-T^2)^2\\
  R(\d_2,\d_4)E_4 &= E_2(\mid+\alpha')(\mid-T^2)\\
  R(\d_2,\d_4)E_1 &= 0\qquad\quad R(\d_2,\d_4)E_3 = 0\\
  R(\d_3,\d_4)E_3 &= -E_4(\mid+\alpha')\Zsq\Wsq(\mid-T^2)\\
  R(\d_3,\d_4)E_4 &= E_3(\mid+\alpha')(\mid-T^2)\\
  R(\d_3,\d_4)E_1 &= 0\qquad\quad R(\d_3,\d_4)E_2 = 0
\end{align*}
and the only non-zero curvature components
$R_{abpq}=\hd(E_a,R(\d_p,\d_q)E_b)$ turn out to be
\begin{align*}
  R_{1212} &= \delta\paraa{\mid-(\alpha+T)^2}\Zsq\Wsq(\mid-T^2)^3\\
  R_{1313} &= \delta\paraa{\mid-(\alpha+T)^2}|Z|^4\Wsq(\mid-T^2)^2\\
  R_{1414} &= \delta(\mid+\alpha')\Zsq(\mid-T^2)^3\\
  R_{2323} &= \delta\paraa{\mid-(\alpha+T)^2}\Zsq|W|^4(\mid-T^2)^2\\
  R_{2424} &= \delta(\mid+\alpha')\Wsq(\mid-T^2)^3\\
  R_{3434} &= \delta(\mid+\alpha')\Zsq\Wsq(\mid-T^2)^2.
\end{align*}
In the local algebra $\Zloc$, the metric $\hd$ is invertible since $\delta$ is
invertible. Moreover, every component of the metric, as well as of the
curvature, is central, which implies that there exists a naive analogue
of the integrand in \eqref{eq:gcb.formula}. Setting 
\begin{align*}
  R^{abcd} &= (\hd)^{ap}(\hd)^{bq}(\hd)^{cr}(\hd)^{ds}R_{pqrs}\\
  \Ric_{ab} &= (\hd)^{pq}R_{apbq}\\
  \Ric^{ab} &= (\hd)^{ap}(\hd)^{bq}\Ric_{pq}\\
  S &= (\hd)^{ab}\Ric_{ab}
\end{align*}
one finds that
\begin{align}\label{eq:gcb.term}
  R^{abcd}R_{abcd}&-4\Ric_{ab}\Ric^{ab}+S^2
  =24\paraa{\mid-(\alpha+T)^2}(\mid+\alpha')(\mid-T^2)^{-1}\delta^{-2}.
\end{align}

\begin{theorem}\label{thm:gcb.theorem}
  Let $\delta(T)$ be an invertible polynomial in $\Zloc$ and define
  $\alpha$ via the relation $\d_4\delta=2\alpha\delta$. If
  \begin{align*}
    \phi_0(\alpha)\big|_{\psi=\frac{\pi}{2}}=\phi_0(\alpha)\big|_{\psi=-\frac{\pi}{2}} = 0,
  \end{align*}
  then
  \begin{align*}
    \chi(\Sfourt)=\frac{1}{32\pi^2}\tdloc\paraa{R^{abcd}R_{abcd}-4\Ric_{ab}\Ric^{ab}+S^2}=2.
  \end{align*}
\end{theorem}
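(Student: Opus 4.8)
The plan is to combine the explicit formula \eqref{eq:gcb.term} for the Gauss-Chern-Bonnet integrand with the definition of the localized trace $\tdloc$, and then show that the resulting integral evaluates to $64\pi^2$, so that dividing by $32\pi^2$ yields $\chi(\Sfourt)=2$. First I would apply $\phi_0$ to the right-hand side of \eqref{eq:gcb.term}. Since every factor there lies in $\Zloc$ and $\phi_0$ is a $\ast$-homomorphism, I can compute $\phi_0$ termwise using the substitution $\phi_0(T)=\sin\psi$, $\phi_0(\mid-T^2)=\cos^2\psi$, together with $\phi_0(\alpha)$ and $\phi_0(\alpha')$ expressed as functions of $\psi$. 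The factor $\delta^{-2}$ in \eqref{eq:gcb.term} will be cancelled by the weight $\phi_0(\delta^2)$ appearing in the definition of $\tdloc$, which is the crucial simplification: the metric perturbation drops out entirely, exactly as one expects from a topological invariant.

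Next I would set up the integral. After the cancellation of $\delta^{\pm2}$ and one power of $\phi_0(\mid-T^2)^{-1}=\cos^{-2}\psi$ against the volume weight $\cos^3\psi$, the $\xi_1,\xi_2,\varphi$ integrations become trivial (the integrand no longer depends on $\xi_1,\xi_2$, and the $\varphi$-integral of $\sin\varphi\cos\varphi$ over $(0,\pi/2)$ gives a constant factor of $\tfrac12$). What remains is a one-dimensional integral over $\psi\in(-\pi/2,\pi/2)$ of something of the shape $24\paraa{1-(\phi_0(\alpha)+\sin\psi)^2}(1+\phi_0(\alpha'))\cos\psi\,d\psi$, up to the overall constants from the $\xi$- and $\varphi$-integrations. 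The key observation is that this integrand is a total derivative in $\psi$: writing $u=\sin\psi$ so that $du=\cos\psi\,d\psi$, and noting that $\phi_0(\alpha)$ and $\phi_0(\alpha')$ are functions of $u=\sin\psi=\phi_0(T)$ with $\tfrac{d}{du}\phi_0(\alpha)=\phi_0(\alpha')$, the factor $(1-(\phi_0(\alpha)+u)^2)(1+\tfrac{d}{du}\phi_0(\alpha))$ is precisely $\tfrac{d}{du}\bracket{(\phi_0(\alpha)+u)-\tfrac13(\phi_0(\alpha)+u)^3}$, so the $\psi$-integral reduces to evaluating a primitive at the endpoints $u=\pm1$, i.e. $\psi=\pm\pi/2$.

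At the endpoints the boundary hypothesis enters: the assumption $\phi_0(\alpha)|_{\psi=\pm\pi/2}=0$ ensures that $\phi_0(\alpha)+u$ evaluates to $\pm1$ there, so the primitive $\bracket{v-\tfrac13 v^3}$ with $v=\phi_0(\alpha)+u$ gives $(1-\tfrac13)-(-1+\tfrac13)=\tfrac43$. Carrying through the constants — $4\pi^2$ from $\int d\xi_1\,d\xi_2$, $\tfrac12$ from $\int_0^{\pi/2}\sin\varphi\cos\varphi\,d\varphi$, the $24$, and the $\tfrac43$ from the $\psi$-integral — should produce exactly $4\pi^2\cdot\tfrac12\cdot24\cdot\tfrac43=64\pi^2$, and dividing by $32\pi^2$ yields $2$, matching $\chi(S^4)$. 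I would record the endpoint evaluation carefully, since that is where the boundary condition on $\phi_0(\alpha)$ does all the work of rendering the answer independent of the particular perturbation.

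I expect the main obstacle to be purely bookkeeping rather than conceptual: correctly assembling the product of $\phi_0$ applied to each factor in \eqref{eq:gcb.term}, keeping track of which powers of $\cos^2\psi$ survive after the cancellation against $\cos^3\psi$ and against $(\mid-T^2)^{-1}$, and verifying that the remaining integrand really is the total $u$-derivative claimed above so that only the boundary values matter. The one genuine subtlety is confirming that $\tfrac{d}{du}\phi_0(\alpha)=\phi_0(\alpha')$ under the identification $u=\phi_0(T)$, which follows because $\alpha$ and $\alpha'$ are polynomials in $T$ related by formal differentiation and $\phi_0$ sends $T\mapsto\sin\psi=u$; once this is in hand, the fundamental theorem of calculus and the stated endpoint condition finish the computation.
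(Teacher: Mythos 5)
Your proposal is correct and follows essentially the same route as the paper's own proof: cancel $\delta^{\pm 2}$ between \eqref{eq:gcb.term} and the weight in $\tdloc$, factor out the trivial $\xi_1,\xi_2,\varphi$ integrations, substitute $t=\sin\psi$, and recognize the remaining integrand as the total derivative of $(\alpha(t)+t)-\tfrac13(\alpha(t)+t)^3$, so that the boundary condition $\alpha(\pm 1)=0$ yields $\tfrac43$ per endpoint pair and hence $\chi=2$. The only cosmetic slip is calling $\alpha$ a polynomial in $T$ (it is in general a rational expression $-\tfrac12(\mid-T^2)\delta'\delta^{-1}$), but this does not affect the argument, since $\phi_0$ still intertwines the formal $T$-derivative with $d/dt$.
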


\begin{proof}
  Since $\delta$ is a polynomial in $T$ and $\d_4T=T^2-\mid$, one can
  express $\alpha$ in terms of $T$ and, by a slight abuse of notation,
  we let $\alpha(t)$ be such that
  $\phi_0(\alpha)=\alpha(\sin\psi)$. In this notation, the assumption
  on $\phi_0(\alpha)$ may be stated as $\alpha(1)=\alpha(-1)=0$.

  From the definition of $\tdloc$ it follows that
  \begin{align*}
    \chi&=\frac{1}{32\pi^2}\tdloc\paraa{R^{abcd}R_{abcd}-4\Ric_{ab}\Ric^{ab}+S^2}\\
    &= I_\psi\int_{0}^{2\pi}d\xi_1\int_0^{2\pi}d\xi_2\int_0^{\frac{\pi}{2}}
    \sin\varphi\cos\varphi d\varphi,
  \end{align*}
  where
  \begin{align*}
    I_\psi 
    &=\frac{24}{32\pi^2}\int_{-\frac{\pi}{2}}^{\frac{\pi}{2}}
    \paraa{1-(\alpha(\sin\psi)+\sin\psi)^2}(1+\alpha'(\sin\psi))\cos\psi
    d\psi.
  \end{align*}
  Substituting $t=\sin\psi$ gives
  \begin{align*}
    I_\psi = \frac{24}{32\pi^2}\int_{-1}^1\paraa{1-(\alpha(t)+t)^2}(1+\alpha'(t))dt,
  \end{align*}
  which can easily be integrated to
  \begin{align*}
    I_\psi =
    \frac{24}{32\pi^2}\bracketb{\alpha(t)+t-\tfrac{1}{3}\paraa{\alpha(t)+t}^3}_{-1}^1
    =\frac{24}{32\pi^2}\parab{1-\tfrac{1}{3}+1-\tfrac{1}{3}}=\frac{1}{\pi^2},
  \end{align*}
  since $\alpha(1)=\alpha(-1)=0$. Finally, one obtains
  \begin{align*}
    \chi=I_\psi&\int_{0}^{2\pi}d\xi_1\int_0^{2\pi}d\xi_2\int_0^{\frac{\pi}{2}}
    \sin\varphi\cos\varphi d\varphi\\
    &=\frac{1}{\pi^2}\int_{0}^{2\pi}d\xi_1\int_0^{2\pi}d\xi_2\int_0^{\frac{\pi}{2}}
    \sin\varphi\cos\varphi d\varphi=\frac{1}{\pi^2}\cdot 4\pi^2\cdot\frac{1}{2}=2,
  \end{align*}
  which proves the statement.
\end{proof}

\noindent
In this paper, we have preferred to stay in the purely algebraic
regime, and have thus not considered any smooth completion of
$\Sfourt$, in order to stress the point that our results do not depend
on the analytic structure. However, we expect that
Theorem~\ref{thm:gcb.theorem} holds true even for more general
perturbations in a potentially larger algebra. For instance, if
$\delta=e^{\lambda T}$ exists for all $\lambda\in\reals$, one obtains
$\alpha=\tfrac{\lambda}{2}(T^2-\mid)$ which clearly fulfills the
conditions of Theorem~\ref{thm:gcb.theorem}. Moreover, one may
consider perturbations given, not only as functions of $T$, but as
more general elements of $\Zloc$. Although our approach to the
Gauss-Bonnet-Theorem may be too naive to have any impact on the
general problem, we hope that our investigations will contribute to
the growing understanding of Riemannian curvature in noncommutative geometry.

\section*{Acknowledgment}

\noindent We would like to thank M. Khalkhali and
J. Rosenberg for discussions during the \textit{Fields Workshop on the
Geometry of Noncommutative Manifolds} in March, 2015.
Furthermore, J. A. is supported by the Swedish Research Council.

\bibliographystyle{alpha}
\bibliography{sphere_curvature}  

\def\polhk#1{\setbox0=\hbox{#1}{\ooalign{\hidewidth
  \lower1.5ex\hbox{`}\hidewidth\crcr\unhbox0}}}
\begin{thebibliography}{DVMMM96}

\bibitem[AC10]{ac:ncgravitysolutions}
P.~Aschieri and L.~Castellani.
\newblock Noncommutative gravity solutions.
\newblock {\em J. Geom. Phys.}, 60(3):375--393, 2010.

\bibitem[AHH12]{ahh:multilinear}
J.~Arnlind, J.~Hoppe, and G.~Huisken.
\newblock Multi-linear formulation of differential geometry and matrix
  regularizations.
\newblock {\em J. Differential Geom.}, 91(1):1--39, 2012.

\bibitem[Arn14]{a:curvatureGeometric}
J.~Arnlind.
\newblock Curvature and geometric modules of noncommutative spheres and tori.
\newblock {\em J. Math. Phys.}, 55:041705, 2014.

\bibitem[AW16]{aw:curvature.three.sphere}
J.~Arnlind and M.~Wilson.
\newblock {R}iemannian curvature of the noncommutative 3-sphere.
\newblock {\em J. of Noncommut. Geom. (to appear)}, 2016.
\newblock \texttt{arXiv:1505.07330}.

\bibitem[BM11]{bm:starCompatibleConnections}
E.~J. Beggs and S.~Majid.
\newblock {$*$}-compatible connections in noncommutative {R}iemannian geometry.
\newblock {\em J. Geom. Phys.}, 61(1):95--124, 2011.

\bibitem[CFF93]{cff:gravityncgeometry}
A.~H. Chamseddine, G.~Felder, and J.~Fr{\"o}hlich.
\newblock Gravity in noncommutative geometry.
\newblock {\em Comm. Math. Phys.}, 155(1):205--217, 1993.

\bibitem[CL01]{cl:isospectral}
A.~Connes and G.~Landi.
\newblock Noncommutative manifolds, the instanton algebra and isospectral
  deformations.
\newblock {\em Comm. Math. Phys.}, 221(1):141--159, 2001.

\bibitem[CM14]{cm:modularCurvature}
A.~Connes and H.~Moscovici.
\newblock Modular curvature for noncommutative two-tori.
\newblock {\em J. Amer. Math. Soc.}, 27(3):639--684, 2014.

\bibitem[CT11]{ct:gaussBonnet}
A.~Connes and P.~Tretkoff.
\newblock The {G}auss-{B}onnet theorem for the noncommutative two torus.
\newblock In {\em Noncommutative geometry, arithmetic, and related topics},
  pages 141--158. Johns Hopkins Univ. Press, Baltimore, MD, 2011.

\bibitem[DL02]{dl:instanton.algebras}
L.~D{\polhk{a}}browski and G.~Landi.
\newblock Instanton algebras and quantum 4-spheres.
\newblock {\em Differential Geom. Appl.}, 16(3):277--284, 2002.

\bibitem[DS13]{ds:curved.torus.gb}
L.~D{\polhk{a}}browski and A.~Sitarz.
\newblock Curved noncommutative torus and {G}auss-{B}onnet.
\newblock {\em J. Math. Phys.}, 54(1):013518, 11, 2013.

\bibitem[DS15]{ds:asymmetric}
L.~D{\c{a}}browski and A.~Sitarz.
\newblock An asymmetric noncommutative torus.
\newblock {\em SIGMA Symmetry Integrability Geom. Methods Appl.}, 11:Paper 075,
  11, 2015.

\bibitem[DVMMM96]{dvmmm:onCurvature}
M.~Dubois-Violette, J.~Madore, T.~Masson, and J.~Mourad.
\newblock On curvature in noncommutative geometry.
\newblock {\em J. Math. Phys.}, 37(8):4089--4102, 1996.

\bibitem[ESW16]{esw:moyal.sphere}
M.~Eckstein, A.~Sitarz, and R.~Wulkenhaar.
\newblock The {M}oyal sphere.
\newblock \texttt{arXiv:1601.05576}, 2016.

\bibitem[FK12]{fk:gaussBonnet}
F.~Fathizadeh and M.~Khalkhali.
\newblock The {G}auss-{B}onnet theorem for noncommutative two tori with a
  general conformal structure.
\newblock {\em J. Noncommut. Geom.}, 6(3):457--480, 2012.

\bibitem[FK13]{fk:scalarCurvature}
F.~Fathizadeh and M.~Khalkhali.
\newblock Scalar curvature for the noncommutative two torus.
\newblock {\em J. Noncommut. Geom.}, 7(4):1145--1183, 2013.

\bibitem[FK15]{fk:curvatureFourTori}
F.~Fathizadeh and M.~Khalkhali.
\newblock Scalar curvature for noncommutative four-tori.
\newblock {\em J. Noncommut. Geom.}, 9(2):473--503, 2015.

\bibitem[Liu15]{l:modular.toric}
Y.~Liu.
\newblock Modular curvature for toric noncommutative manifolds.
\newblock \texttt{arXiv:1510.04668}, 2015.

\bibitem[LM15]{lm:modularCurvature}
M~Lesch and H.~Moscovici.
\newblock Modular curvature and {M}orita equivalence.
\newblock \texttt{arXiv:1505.00964}, 2015.

\bibitem[Maj05]{m:nc.riemannian.spin}
S.~Majid.
\newblock Noncommutative {R}iemannian and spin geometry of the standard
  {$q$}-sphere.
\newblock {\em Comm. Math. Phys.}, 256(2):255--285, 2005.

\bibitem[Ore31]{o:lineareqnc}
O.~Ore.
\newblock Linear equations in non-commutative fields.
\newblock {\em Ann. of Math. (2)}, 32(3):463--477, 1931.

\bibitem[PS15]{ps:nc.levi.civita}
M.~A. Peterka and A.~J.~L. Sheu.
\newblock On noncommutative {L}evi-{Civita} connections.
\newblock \texttt{arXiv:1511.02901}, 2015.

\bibitem[Ros13]{r:leviCivita}
J.~Rosenberg.
\newblock {L}evi-{C}ivita's theorem for noncommutative tori.
\newblock {\em SIGMA}, 9:071, 2013.

\bibitem[Sit03]{s:dynamical.spheres}
A.~Sitarz.
\newblock Dynamical noncommutative spheres.
\newblock {\em Comm. Math. Phys.}, 241(1):161--175, 2003.

\end{thebibliography}

\end{document}